\documentclass[12pt]{amsart}
\usepackage[dvipsnames]{xcolor}
\usepackage{graphicx} 
\usepackage{babel}
\usepackage{float}
\usepackage{amsmath,amssymb,amsfonts,mathrsfs,mathtools}
\usepackage[shortlabels]{enumitem}
\setlist{leftmargin=6mm}
\usepackage[centering]{geometry}
\usepackage{soul}
\usepackage{cancel}
\usepackage{tikz,tikz-network}
\usepackage{overpic}
\usepackage{transparent}
\usepackage[style=alphabetic, backend=biber]{biblatex}
\usepackage[pdftex,  colorlinks=true]{hyperref} 
    \hypersetup{urlcolor=RoyalBlue, linkcolor=RoyalBlue,  citecolor=black}
\usepackage[capitalize]{cleveref} 

\theoremstyle{plain}
\newtheorem{theorem}{Theorem}[section]
\newtheorem{proposition}[theorem]{Proposition}
\newtheorem{cor}[theorem]{Corollary}
\newtheorem{lemma}[theorem]{Lemma}

\theoremstyle{definition}

\newtheorem{definition}[theorem]{Definition}
\newtheorem{example}[theorem]{Example}

\newcommand{\Map}{\operatorname{Map}}

\newcounter{commentcounter}

\title[Limiting laminations]{Limiting laminations and boundary weights for infinite-type mapping classes}
\author{Carolyn Abbott AND Phuong Pham}

\begin{document}

\maketitle

\begin{abstract}
    For every infinite-type surface $\Sigma$ with an isolated puncture that admits a shift map and every natural number $n$, we study an intrinsically infinite-type mapping class $g_n$ and a simple closed curve whose orbit under $g_n$ splits into $n$ subsequences, each of which converges in the coarse Chabauty topology to a geodesic lamination on $\Sigma$. These laminations are cyclically permuted by $g_n$, and their union is a $g_n$--invariant geodesic lamination. We further show the attracting limit point of $g_n$ in the boundary of the relative arc graph has weight $n$. In particular, intrinsically infinite-type loxodromic isometries of the relative arc graph realize every finite weight.
\end{abstract}

\section{Introduction}
    For a finite-type surface, pseudo-Anosov mapping classes have two important properties.  They act loxodromically on the curve graph, and the asymptotic behavior of the positive and negative iterates of a simple closed curve is governed by the stable and unstable geodesic laminations, respectively.  Thus, in the finite-type setting, these two phenomena occur together. For infinite-type surfaces, it is natural to ask to what extent this remains true.

   In this paper, we focus on mapping classes that are loxodromic isometries of hyperbolic graphs associated to $\Sigma$.  While there are several natural candidates for such a graph \cite{B-NV, DFV},  we restrict our attention to the \textit{relative arc graph}, introduced by Aramayona, Fossas, and Parlier \cite{AFP}, which can be associated to any infinite-type surface that has an isolated puncture; see Section~\ref{sec:relarcgraph}.  In \cite{Abbott_2022, Abbott_2025}, the first author, Miller, and Patel construct two infinite families of loxodromic isometries of the relative arc graph consisting of \textit{intrinsically infinite-type} elements of $\Map(\Sigma)$, that is, mapping classes that are not contained in the closure of the compactly supported elements $\overline{\Map_c(\Sigma)}$. 
    For the first family, a similar property as for pseudo-Anosov mapping classes holds: there exists a simple closed curve on $\Sigma$ whose images under positive powers of an element of the family converge to a geodesic lamination on $\Sigma$ \cite[Theorem~1.2]{Abbott_2022}.  
    
    For the second family, the orbit of the curve does not converge.  Instead, it splits into $n$ subsequences, one for each residue class modulo $n$, each of which converges to a geodesic lamination.

    \begin{theorem}\label{main}
        Let $\Sigma$ be an infinite-type surface with an isolated puncture that admits a shift map, and equip $\Sigma$ with its conformal hyperbolic metric so that it is equal to its convex core.  For each $n\in \mathbb N$ there is an element $g_n\in \Map(\Sigma)$ and a simple closed curve $c$ on $\Sigma$ such that $(g_n^{nk+t}(c))_{k\in \mathbb N}$ converges in the coarse Chabauty topology to a geodesic lamination $L_t$ as $k\rightarrow\infty$ for each $t\in\{0,1,\cdots,n-1\}$.  
    \end{theorem}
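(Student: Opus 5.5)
The plan is to construct $g_n$ explicitly as a composition of a shift map with a finite-order "rotation", following the second family of loxodromic isometries in \cite{Abbott_2025}. Fix a shift $h$ on $\Sigma$ whose domain is a bi-infinite strip $S\cong \mathbb R\times[0,1]$ with translates of a fixed compact piece $P$ placed at the integer points. Choose $n$ disjoint copies of this strip (or, more simply, a strip subdivided into $n$ parallel lanes) and a finite-order mapping class $\rho$, supported near the isolated puncture $p$, that cyclically permutes the $n$ lanes. Set $g_n=\rho\circ h$, where $h$ shifts all lanes simultaneously. Then $g_n^n = h^n$ up to a compactly supported correction, because $\rho^n=\mathrm{id}$ and the pieces commute up to conjugation. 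Take $c$ to be a simple closed curve that passes around $p$ and enters the first lane.

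\textbf{Step 1.} Compute $g_n^{nk+t}(c)$ explicitly. This equals $g_n^t\bigl(g_n^{nk}(c)\bigr)$. Since $g_n^{nk}$ acts on each lane as the $k$-fold (or $nk$-fold) shift, $g_n^{nk}(c)$ consists of a fixed portion near $p$ together with arcs that run out along the lanes and spiral toward the ends of the strip. Describe this curve combinatorially by its intersections with a fixed family of arcs from $p$ transverse to the lanes.

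\textbf{Step 2.} Identify a candidate limit $L_0$. It should consist of the geodesics obtained as limits of these arcs: bi-infinite leaves running from $p$ (or spiralling near $p$) out to the two ends of the strip, together with any closed leaves. Then set $L_t=g_n^t(L_0)$. Because $g_n^t$ is a homeomorphism, and so acts continuously on geodesic laminations in the coarse Chabauty topology (for a fixed hyperbolic structure, after straightening), it suffices to prove the case $t=0$. The subsequences differ only because $\rho^t$ permutes which lane carries the "long" arcs.

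\textbf{Step 3.} Prove convergence of $g_n^{nk}(c)$ to $L_0$ using the standard characterization of coarse Chabauty convergence of closed sets. Every point of $L_0$ must be a limit of points of the curves, and every accumulation point of points on the curves must lie in $L_0$, with the limiting tangent directions also converging. For the first condition, use that on any compact subsurface $K$ the geodesic representatives of $g_n^{nk}(c)$ eventually contain arcs fellow-travelling fixed leaves of $L_0$; these arcs are given by a stabilizing pattern of the arc system within $K$. For the second condition, show that within $K$ the combinatorial type of $g_n^{nk}(c)\cap K$ stabilizes for large $k$. Any accumulation geodesic must then be homotopic rel endpoints to segments of leaves of $L_0$, and hence, by hyperbolicity (Gauss–Bonnet and thin-quadrilateral estimates), must lie on $L_0$.

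\textbf{Main obstacle.} The hard step is the stabilization claim in Step 3. One has to show that the geodesic representative inside a fixed compact $K$ is determined by the stabilized combinatorics, even though the curve keeps getting longer outside $K$. I would first try to choose $K$ to be a union of finitely many translates of $P$ plus a neighborhood of $p$. I would then use the fact that the shift moves every compact set off itself eventually, so that only the "tails" change. Finally, I would bound the distance between the geodesic representatives and piecewise-geodesic models using uniform quasi-geodesic constants coming from the periodic structure of the strip.
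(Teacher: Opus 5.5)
There is a genuine gap, starting with the construction. A mapping class supported in a compact neighbourhood of $p$ is the identity outside a compact set, so it cannot permute disjoint bi-infinite lanes. A compactly supported rotation permuting $n$ holes around $p$ is a fractional twist whose $n$-th power is the Dehn twist about the outer boundary, not the identity.

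Even granting some $\rho$, the relation ``$g_n^n=h^n$ up to a compactly supported correction'' holds for every compactly supported perturbation of $h$, so it carries no information. Writing $g_n^n=h^nF$, one gets $g_n^{nk}=h^{nk}\,(h^{-n(k-1)}Fh^{n(k-1)})\cdots(h^{-n}Fh^{n})F$, and controlling how these accumulated corrections tangle the curve is the whole problem. Your Step 1 simply asserts the answer: a fixed piece near $p$ plus arcs running out the lanes. For the paper's map $g_n=h\circ T_\chi$, with $\chi$ enclosing $p$ and $B_{-n+2},\dots,B_1$, that is not what happens. The limit guessed in Step 2 is also wrong: the rays $\beta^t$ built from the leaves $\gamma^t$ are high-filling, so the limit is not a family of leaves running from $p$ straight out to the ends of the strip.

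The stabilization claim in Step 3 fails for the paper's $g_n$, and indeed for any $g_n$ acting loxodromically on $\mathcal A(\Sigma,p)$. The arcs $\alpha_{nk}$ go to infinity in $\mathcal A(\Sigma,p)$, so their intersection numbers with a fixed loop are unbounded. Hence the number of strands of $g_n^{nk}(c)$ through a fixed compact $K$ near $p$ is unbounded, and the combinatorial type of $g_n^{nk}(c)\cap K$ never settles down. Concretely, the blocks $\delta_j$ in the proof of Lemma~\ref{common} keep returning around $p$; for $n=2$, $\delta_1$ already contains $P_oP_u$. Your heuristic that the shift pushes everything off $K$ fails because the twist keeps feeding strands back near $p$.

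What does stabilize is only the prefix of the code read from $p$, i.e.\ one fixed lift, and this is the missing idea. The initial segment $\delta_0$ recurs every $n$ iterates (Lemma~\ref{ini_con}), and pairs $k_{o/u}k_{u/o}$ with $k\geq 2$ block cancellation. Together these give that $\alpha_{nk+t}$ and $\alpha_{n(k+1)+t}$ share the first half of their codes. Next, the arcs are closed up into curves $C_i$ near $p$, which is needed because \u{S}ari\'{c}'s criterion excludes geodesics running into a cusp. Then the $\Theta$-codes and Proposition~\ref{prop:convtogeod} give convergence of a fixed lift to $\tilde\gamma^t$, and one sets $L_t=\overline{\gamma^t}$. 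Finally, coarse Chabauty convergence only asks that each leaf of $L_t$ be a limit of leaves of the approximants. So the upper-semicontinuity half of Step 3, which your stabilization claim was meant to supply, is not needed.
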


    The laminations obtained in Theorem~\ref{main} are pairwise non-transverse, and so their union forms a single geodesic lamination.  The element $g_n$ cyclically permutes the sublaminations $L_t$, and so their union is invariant under $g_n$.

\begin{theorem}\label{thm:main3}
    The union $L=\bigcup_{t=0}^{n-1}L_t$ is a $g_n$--invariant geodesic lamination.   Moreover, $g_n(L_t)=L_{t+1}$, with indices taken modulo $n$.
\end{theorem}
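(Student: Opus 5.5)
The plan is to deduce both parts of Theorem~\ref{thm:main3} from Theorem~\ref{main}, using two facts. First, a homeomorphism of $\Sigma$ acts continuously on closed subsets in the coarse Chabauty topology. Second, the union of finitely many pairwise non-transverse geodesic laminations is again a geodesic lamination.

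\textbf{Step 1: the equivariance $g_n(L_t)=L_{t+1}$.} Fix a representative homeomorphism $\phi$ of $g_n$. Via the unique geodesic representative in each isotopy class, $g_n$ acts on simple closed geodesics, and more generally on geodesic laminations. Concretely, one lifts $\phi$ to the universal cover and extends it to the circle at infinity. Geodesics are then recorded by their endpoint pairs, and $g_n$ sends the geodesic with endpoints $(a,b)$ to the geodesic with endpoints $(\partial\tilde\phi(a),\partial\tilde\phi(b))$.

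I would show this induced map is continuous for the coarse Chabauty topology, working in whatever formulation the paper uses (presumably via Hausdorff convergence on compact subsets, or via endpoint pairs in $\partial\mathbb H^2$). The reason is that $\partial\tilde\phi$ is a homeomorphism of the circle that commutes with the deck action.

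Then, from $g_n^{nk+t}(c)\to L_t$, applying $g_n$ gives
\[
g_n^{nk+t+1}(c)\to g_n(L_t).
\]
For $t<n-1$ the left-hand sequence is exactly the sequence converging to $L_{t+1}$. For $t=n-1$ it is $g_n^{n(k+1)}(c)$, which is a tail of the $t=0$ sequence. Uniqueness of limits then gives $g_n(L_t)=L_{t+1}$ with indices mod $n$, provided the topology is Hausdorff on the relevant space; I expect the paper's coarse Chabauty topology to be set up so that it is.

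\textbf{Step 2: $L$ is a geodesic lamination.} Each $L_t$ is closed and consists of pairwise disjoint simple complete geodesics, so a finite union is closed. It remains to show that leaves of different $L_s$ and $L_t$ do not cross transversally. For this I would argue directly from the convergence.

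Suppose a leaf $\ell\subset L_s$ and a leaf $m\subset L_t$ cross at a point $p$ with positive angle. By Chabauty convergence there are curves $g_n^{nk+s}(c)$ with arcs near $\ell$ at $p$, and curves $g_n^{nj+t}(c)$ with arcs near $m$ at $p$, for all large $k,j$. Two simple closed geodesics crossing near $p$ contribute to their geometric intersection number, and
\[
i\bigl(g_n^{nk+s}(c),\,g_n^{nj+t}(c)\bigr)=i\bigl(c,\,g_n^{n(j-k)+t-s}(c)\bigr).
\]
So a contradiction would need a bound showing that $c$ is eventually disjoint from, or at least not crossing near $p$, its high iterates.

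This is where I expect the main obstacle, and it must use the explicit structure of $g_n$ (a shift composed with something supported near the isolated puncture) established in the proof of Theorem~\ref{main}. The cleaner route I would try first is to extract from that proof an explicit description of the leaves of each $L_t$: for instance, as limits of arcs escaping down the shift's end, or as bi-infinite geodesics asymptotic to the puncture, arranged in disjoint "strips" indexed by $t$. From such a description, pairwise disjointness or non-transversality of leaves would be checked combinatorially.

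Finally, $g_n$-invariance of $L$ follows from Step 1, since $g_n$ permutes the $L_t$ cyclically.
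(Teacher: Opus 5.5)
There are two genuine gaps. The second is the heart of the theorem.

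\textbf{Step 1.} Your argument relies on uniqueness of coarse Chabauty limits, and that fails for the convergence used here. By the definition in Section~\ref{sec:laminations}, only the leaves of the limit need to be approximated. So if a sequence converges to $L$, it also converges to every sublamination of $L$, including the empty one. The paper also never shows that $L_t$ is the full (Hausdorff) Chabauty limit, which could be strictly larger. Knowing that $g_n(L_t)$ and $L_{t+1}$ are both limits of $(g_n^{nk+t+1}(c))_k$ therefore gives no equality.

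Your boundary-map idea does work if you apply it to the distinguished geodesics rather than to the laminations. Recall $L_t=\overline{\gamma^t}$, where $\tilde\gamma^t$ is the limit of the specific lifts $\widetilde{\gamma_{nk+t}}$ through the fixed lift $\tilde\nu$. Choose the lift of $g_n$ compatibly with these lifts. Then uniqueness of limits of endpoint pairs in $\partial\mathbb H^2$ gives $g_n(\gamma^t)=\gamma^{t+1}$, hence $g_n(L_t)=\overline{g_n(\gamma^t)}=L_{t+1}$. This is essentially the paper's argument, phrased there via the rays $\beta^t$. The paper obtains $g_n(\beta^t)=\beta^{t+1}$ from uniqueness of cover-convergence limits, then uses that the modification inside $D_p$ is $g_n$--equivariant.

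\textbf{Step 2.} You need that leaves of $L_s$ and $L_t$ never cross for $s\neq t$, and the intersection-number approach cannot deliver this. For $s=t$, crossings are excluded because two lifts of one simple closed curve cannot cross. For $s\neq t$, the approximating curves are distinct curves that genuinely intersect. Since $g_n$ is loxodromic on $\mathcal A(\Sigma,p)$, the arcs $\alpha_0$ and $\alpha_m$ are at distance at least $2$, hence intersect, for all large $m$. Asking that they not cross near the given point simply restates the claim. Your fallback, an explicit strip description of the leaves, is not carried out, and the paper does not use one.

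The missing idea is to import disjointness from the boundary of the relative arc graph:
\begin{enumerate}
\item Form the rays $\beta^s,\beta^t$ from $\gamma^s,\gamma^t$ by replacing the arc on $\partial D_p$ with two segments into $p$.
\item By the proof of Theorem~\ref{thm:main2} and Lemma~\ref{lem:boundarypt}, both rays lie in the single clique of high-filling rays representing the attracting point of $g_n$. Rays in a clique are pairwise disjoint.
\item The modification takes place inside $D_p$ and creates no transverse crossings, so $\gamma^s$ and $\gamma^t$ do not cross.
\item General leaves reduce to this case. Since $L_t=\overline{\gamma^t}$ and crossing is witnessed by finite subpaths, crossing leaves of $L_s$ and $L_t$ would force crossing deck translates of $\tilde\gamma^s$ and $\tilde\gamma^t$.
\end{enumerate}
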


As an application, we obtain intrinsically infinite-type loxodromic isometries whose attracting limit points realize every finite weight. Bavard--Walker describe the boundary of the relative arc graph as a space of ``cliques of high-filling rays;" see Section~\ref{sec:relarcgraph} for a detailed description.  For now, it suffices to say that each point in $\partial A(\Sigma,p)$ corresponds to a clique of a certain kind of ray on $\Sigma$, and the cardinality of each clique is called the \textit{weight} of the boundary point. Bavard and Walker ask for examples of ``fundamentally infinite-type" loxodromic isometries whose limit points have weight greater than one \cite{BavardWalker}.  Associated to the $n$ laminations $L_t$ are $n$ rays based at the isolated puncture $p$. We show that these rays form the clique representing the attracting fixed point of $g_n$ in the boundary of the relative arc graph.

    \begin{theorem}\label{thm:main2}
        For every $n\geq 2$, there exists an intrinsically infinite-type mapping class $g_n\in \operatorname{MCG}(\Sigma)$ that is a loxodromic isometry of the relative arc graph $\mathcal A(\Sigma,p)$ whose attracting limit point in $\partial \mathcal A(\Sigma, p)$ has weight $n$.
    \end{theorem}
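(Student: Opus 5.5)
The plan is to take $g_n$ to be the element from the second family of Abbott--Miller--Patel \cite{Abbott_2025}. That paper already shows $g_n$ is an intrinsically infinite-type loxodromic isometry of $\mathcal A(\Sigma,p)$, so what remains is to identify its attracting limit point $\xi^+\in\partial\mathcal A(\Sigma,p)$. In the Bavard--Walker description, $\xi^+$ is represented by a clique of high-filling rays based at $p$. I want to show this clique is exactly $\{r_0,\dots,r_{n-1}\}$, where $r_t$ is the ray based at $p$ associated to the lamination $L_t$ of Theorem~\ref{main}. For example, $r_t$ could be the leaf of $L_t$ emanating from $p$, or the ray obtained by following a sequence of arcs from $p$ that limit onto $L_t$.

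\textbf{Step 1: build the rays and show they are high-filling.} I would replace the curve $c$ by an arc $a$ from $p$ to $p$ (or a loop at $p$) lying near $c$. Applying Theorem~\ref{main} to $a$, the arcs $g_n^{nk+t}(a)$ converge in the coarse Chabauty topology to $L_t\cup\{r_t^\pm\}$, and their initial segments at $p$ stabilize to the ray $r_t$.

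To show $r_t$ is high-filling, I would argue that it is disjoint from no simple closed curve and that it is not a loop. If $r_t$ missed some curve $\gamma$, then the arcs $g_n^{nk+t}(a)$ would stay at bounded distance from $\gamma$ in $\mathcal A(\Sigma,p)$. That contradicts loxodromicity, because the orbit of $a$ goes to infinity.

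\textbf{Step 2: show the rays form a clique of size exactly $n$.} By Theorem~\ref{thm:main3}, the $L_t$ are pairwise non-transverse, and one should get that the rays $r_t$ are pairwise disjoint. Since $g_n$ permutes the $L_t$ cyclically, it also permutes the $r_t$ cyclically.

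The rays are pairwise distinct, because the $L_t$ are distinct: distinct subsequences have distinct limits. For this I would use the explicit description of $L_t$ coming from the proof of Theorem~\ref{main}, which presumably lives in different "slots" of the shift domain.

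Maximality of the clique comes from the Bavard--Walker result that every high-filling ray disjoint from one ray of a clique lies in that same clique. So the clique containing $r_0$ contains all $r_t$, and I still need to rule out any further ray. Suppose a ray $r$ were disjoint from $r_0$. Then $g_n^{nk}(r)$ would be disjoint from $r_0$ for every $k$. Pulling back by $g_n^{-nk}$ and using the contraction toward the repelling lamination, $r$ would have to accumulate on the same limiting structure as $r_0$. That should force $r$ to be one of the $r_t$.

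\textbf{Step 3: identify the clique with $\xi^+$.} By Step 1, the arcs $g_n^{nk}(a)$ converge in $\mathcal A(\Sigma,p)\cup\partial$ to the clique containing $r_0$. The forward orbit of any vertex converges to $\xi^+$, so this clique is $\xi^+$ and its weight is $n$.

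I expect the main obstacle to be the maximality part of Step 2, namely excluding extra high-filling rays that are disjoint from the $r_t$. The first approach I would try is to use the explicit combinatorial structure of $L_t$ to show that the complement of $\bigcup_t r_t\cup L$ consists of pieces in which every ray based at $p$ either fails to be high-filling or coincides with some $r_t$.
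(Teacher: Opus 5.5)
You have the same skeleton as the paper: rays $\beta^t$ obtained as cover-convergence limits of the residue classes $(\alpha_{nk+t})_k$, placed in the attracting clique, then counted. But the step that actually determines the weight, namely that the clique contains nothing besides these $n$ rays, is not proved. Saying that a ray $r$ disjoint from $r_0$ ``would have to accumulate on the same limiting structure'' via ``contraction toward the repelling lamination'' has no content here, because no repelling lamination or contraction property is available. Your fallback, classifying rays in the complementary regions of $L\cup\bigcup_t r_t$, would need structural information about $L$ that is not established anywhere.

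The missing ingredient is a uniform, quantitative ``starts like'' statement, which the paper imports from \cite{Abbott_2022}. Lemma~4.4 there transports the relation ``starts like $\alpha_i$'' along powers of $g$. Proposition~4.10 says that any arc (or ray, since only an initial portion is used) disjoint from the beginning of $\alpha_0$ has an image under $g^{k}$, for some $0\le k\le 3n+1$, that starts like $\alpha_1$. Given $\delta$ in the clique and $i\equiv t \pmod n$, one gets that $g^{-i}(\beta^t)$ starts like $\alpha_0$. Hence $g^{-i}(\delta)$ misses the beginning of $\alpha_0$, and so $\delta$ starts like $\alpha_{m_i}$ with $m_i=i-k_i+1$. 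The uniform bound on $k_i$ gives $m_i\to\infty$. By Corollary~\ref{ini_t} the initial segment of $\alpha_m$ depends only on $m \bmod n$, and $\delta$ begins in one fixed sector $j$, so all $m_i\equiv j$. Thus $\delta$ is a cover-convergence limit of a subsequence of $(\alpha_{kn+j})_k$ and equals $\beta^j$ by uniqueness of such limits. Without an estimate of this kind you cannot exclude additional rays.

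The supporting steps also need repair.
\begin{itemize}
\item You get pairwise disjointness of the $r_t$ and the cyclic action from Theorem~\ref{thm:main3}. The paper proves Theorem~\ref{thm:main3} \emph{using} exactly these facts about the rays, so this is circular. Instead, apply Lemma~\ref{lem:boundarypt} to the orbit $(\alpha_i)$ and each residue-class subsequence. Every $\beta^t$ is then high-filling and lies in the attracting clique, so disjointness is automatic. The relation $g(\beta^t)=\beta^{t+1}$ follows from $g(\alpha_{nk+t})=\alpha_{nk+t+1}$ and uniqueness of cover-convergence limits.
\item That same lemma (from Bavard--Walker) is what your Steps~1 and~3 need. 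Your high-filling criterion ``meets every simple closed curve and is not a loop'' omits the 2-filling case, and your bounded-distance heuristic is not justified.
\item Your distinctness argument is invalid as stated, since distinct subsequences can share a limit. The correct reason is that the initial segments $g^t(\delta_0)$, $0\le t<n$, begin in different sectors, so the rays already differ near $p$.
\item $L_t$ has no leaf emanating from $p$, since it is built from the closed curves $C_i$, which avoid the cusp. The ray has to be produced by undoing the $D_p$ surgery on $\gamma^t$, as the paper does.
\end{itemize}
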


    Previously, boundary points of weight greater than 1 were known only for loxodromics lying in the closure of the compactly supported mapping classes, including finite-type pseudo-Anosovs and the examples of Morales–Valdez \cite{MoralesValdez}. By contrast, the first known intrinsically infinite-type loxodromics have weight-one limit points\footnote{While this is not written explicitly in that paper, it is clear from the lamination constructed in \cite[Section~8]{Abbott_2025}; see Section~\ref{sec:main} for further discussion.}. Thus Theorem~\ref{thm:main2} gives the first intrinsically infinite-type loxodromics with higher-weight limit points, and the construction realizes every finite weight. \\

    The mapping classes $g_n$ in the statements of the theorems are  compositions of a standard shift map on $\Sigma$ and a Dehn twist about a curve enclosing the isolated puncture and  $n$ ``subsurfaces being shifted;" see Section~\ref{sec:main} for a precise definition. Unlike a pseudo-Anosov mapping class, $g_n$ may fix simple closed curves, so the convergence in Theorem~\ref{main} is necessarily a statement about a specially chosen curve $c$, rather than every simple closed curve. 
  
    The proof of Theorem \ref{main} uses the coding scheme introduced in \cite{Abbott_2025} and work from \cite{Abbott_2022} on understanding cancellation by homotopy in the images $g^n(c)$.  Although there is substantial cancellation between successive iterates, long initial subsegments recur after $n$ iterates, and the length of these common segments increases with $k$.  We convert the relevant arcs to closed curves and apply \u{S}ari\'{c}'s train-track criterion, described in Section~\ref{sec:laminations}, to obtain convergence of each residue class modulo $n$ to a geodesic in the universal cover.  From this we extract coarse Chabauty convergence to a lamination on $\Sigma$.

   The proof of Theorem~\ref{thm:main3} uses a more precise description of these limits. Each $L_t$ is the closure of a distinguished geodesic $\gamma^t$, and, after modifying $\gamma^t$ near the isolated puncture, we obtain a ray $\beta^t$. The rays $\beta^t$ are shown to form the clique corresponding to the attracting fixed point of $g_n$; this simultaneously gives Theorem~\ref{thm:main2} and implies that the laminations $L_t$ have no transverse intersections. The cyclic action of $g_n$ on the rays then gives the $g_n$--invariance in Theorem~\ref{thm:main3}.

\subsection*{Acknowledgements:}  The first author was partially supported by NSF grant DMS-2340341.

\subsection*{AI statement:} The authors used ChatGPT Sol 5.6 as an auxiliary tool for proofreading, improving exposition, and informal proof-checking, including identifying points in arguments that required clarification and suggesting possible revisions.  All mathematical arguments and final text were independently checked by the authors, and the authors take full responsibility for the paper's content.

\section{Background}
    
\subsection{Classification of surfaces}

A surface $S$ is of \emph{finite-type} if its fundamental group is finitely generated and is of \emph{infinite-type} otherwise. 

   Let $\Sigma$ be a surface with possibly non-empty boundary, and let $\operatorname{Homeo^+}(\Sigma,\partial \Sigma)$ be the group of orientation-preserving homeomorphisms $\Sigma\rightarrow \Sigma$ that restrict to the identity on the boundary. We endow $\operatorname{Homeo}^+(\Sigma,\partial \Sigma)$ with the \emph{compact-open topology}, which is generated by the subbasis of open sets of the form 
   \[
   U_{K,V}=\{f\in\operatorname{Homeo^+}(\Sigma,\partial \Sigma)\,|\, f(K)\subset V\}
   \]where $K\subset S$ is compact and $V\subset S$ is open.
    \begin{definition}
        Let $X$ be a surface and $\operatorname{Homeo_0}(X,\partial X)$ be the path-connected component containing the identity of $\operatorname{Homeo^+}(X,\partial X)$. The \emph{mapping class group} of $X$ is the group 
        \[\Map(X)=\operatorname{Homeo^+}(X,\partial X)/\operatorname{Homeo}_0(X,\partial X)\]
        equipped with the quotient topology of the compact-open topology on $\operatorname{Homeo^+}(X,\partial X)$. 
    \end{definition}

Let $\Map_c(\Sigma)$ denote the subgroup of mapping classes with compact support.  Following \cite{Abbott_2022}, we say $\phi\in \Map(\Sigma)$ is \textit{intrisically infinite type} if it is not contained in the closure of $\Map_c(\Sigma)$. One important class of intrinsically infinite-type mapping class is shift maps.

\begin{definition}\label{shift}
    Let $S$ be $\mathbb{R}\times [-1,1]$ with open disks centered at $(n,0)$ for $n\in\mathbb{Z}$ with radius $\frac{1}{4}$ removed. For each such disk, attach homeomorphic copies of a topologically non-trivial surface with one boundary component to its boundary, forming a new surface $S'$. A \emph{shift} on $S'$ is the map that acts like translation on $S'$, moving $(x,y)$ to $(x+n,y)$ for $n\in\mathbb{Z}$ and tapering to the identity on $\partial S'$.

    If $\Sigma$ is a surface containing an embedded copy of $S'$, then a shift on $S'$ can be extended via the identity on $\Sigma-S'$ to a \textit{shift map} on $\Sigma$. The \emph{standard shift map} on $\Sigma$ is when $n=1$. We say a surface $\Sigma$ \textit{admits a shift map} if it contains an embedded copy of $S'$.  See Figure~\ref{fig:StandardShift}.
\end{definition}

When the surface with one boundary component glued to $S$ is a one-holed torus, then the shift map is called a \emph{handle shift}, which was first defined by Patel and Vlamis \cite{PatelValmis:2018}.

\begin{figure}
    \centering
\begin{overpic}[width=3in]{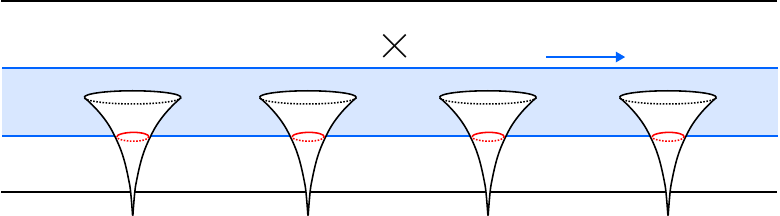}
\put(64, 0){\tiny$p_1$}
\put(87, 0){\tiny$p_2$}
\put(18, 0){\tiny$p_{-1}$}
\put(40.5, 0){\tiny$p_{0}$}
\put(53, 21.5){\tiny$p$}
\put(19, 7){\textcolor{red}{\tiny$B_{-1}$}}
\put(41.5, 7){\textcolor{red}{\tiny$B_{0}$}}
\put(64.5, 7){\textcolor{red}{\tiny$B_{1}$}}
\put(88, 7){\textcolor{red}{\tiny$B_{2}$}}
    \end{overpic}
    \caption{A biinfinite flute surface with the domain of a shift map overlayed on top.  The isolated puncture is labeled $p$.  The $B_i$ are the boundary components of the surface $S$ from Definition~\ref{shift}, and the once-punctured disks with punctures labeled $p_i$ are the topologically non-trivial surfaces.  The  shift sends each puncture in the shaded blue region to the puncture to its right and tapers to the identity map on the boundary of this region.}
    \label{fig:StandardShift}
\end{figure}

\subsection{Laminations}\label{sec:laminations}

In this subsection, we discuss laminations on infinite-type surfaces.  For a comprehensive treatment of the subject, see \cite{Saric:2021}.
    
    \begin{definition}
        A \emph{geodesic lamination} $\lambda$ on a hyperbolic surface $\Sigma$ is a union of disjoint embedded geodesics that form a closed subset of $\Sigma$. The embedded geodesics are called \emph{leaves} of the lamination.  
    \end{definition}

    Let $\Sigma$ be a connected compact orientable surface. A \emph{pants decompostion} of $\Sigma$, denoted $\{P_n\}$, is a collection of pairwise disjoint simple closed curves $\alpha_i$ on $\Sigma$ such that $\Sigma-\cup_{i}\alpha_i$ is a union of surfaces $P_n$,  each homeomorphic to a sphere with three boundary components.  A \emph{train track} on $\Sigma$ is an embedded smooth $1$-complex $\tau$ such that every vertex is trivalent and at each vertex all edges have the same tangent line. 
    Train tracks can be used to study laminations on both finite-type and infinite-type surfaces. 
    
    Fix an infinite-type surface $\Sigma$ and a locally finite geodesic pants decomposition $\{P_n\}$. \u{S}ari\'{c} constructs a particular train track $\Theta$ in \cite{Saric:2021} that will be useful here; see Figure \ref{fig:pantsdecomp}. Consider the lift $\Tilde{\Theta}$ of $\Theta$ to the universal cover $\Tilde{\Sigma}$ of $\Sigma$. A \emph{bi-infinite edge path} of $\Tilde{\Theta}$ is a bi-infinite sequence of edges in $\Tilde{\Theta}$ meeting smoothly at each vertex. By \cite[Proposition 4.6]{Saric:2021}, every bi-infinite edge path has exactly two accumulation points in the boundary at infinity $\partial_\infty\Tilde{X}$. Therefore, the endpoints of a bi-infinite edge path $\Tilde{\alpha}$ of $\Tilde{\Theta}$ determine a unique geodesic $G(\Tilde{\alpha})$ in $\Tilde{\Sigma}$. Given a component $\Tilde{\alpha}$ of the lift of an edge path $\alpha$, we denote by $G(\alpha)$ the projection of $G(\Tilde{\alpha})$ to $\Sigma$. Note that $G(\alpha)$ does not depend on the choice of component $\Tilde{\alpha}$.  
    \begin{definition}
        \begin{itemize}
            \item[(1)] A geodesic $g$ of $\Tilde{\Sigma}$ is said to be \emph{weakly carried by $\Tilde{\Theta}$} if $G(\Tilde{\alpha})=g$ for some bi-infinite path $\Tilde{\alpha}$ in $\Tilde{\Theta}$.
            \item[(2)] $G(\alpha)$ is said to be \emph{weakly carried by $\Theta$} if $G(\Tilde{\alpha})$ is weakly carried by $\Tilde{\Theta}$.
            \item[(3)] A geodesic lamination $L$ on $\Sigma$ is said to be \emph{weakly carried} by $\Theta$ if every leaf of $\ell\in L$ is weakly carried by $\Theta$. 
        \end{itemize}
    \end{definition}
    
    The following result from \cite{Saric:2021} characterizes geodesic laminations in terms of bi-infinite edge paths. 

        \begin{proposition} [{\cite[Proposition 4.11]{Saric:2021}}] \label{prop:biinfedgepaths}
        The set of geodesic laminations on $\Sigma$ that are weakly carried by $\Theta$ is in one-to-one correspondence with the families $\Gamma$ of bi-infinite edge paths of $\Tilde{\Theta}$ that satisfy:
        \begin{itemize}
            \item[(i)] any two bi-infinite edge paths $\alpha$ and $\alpha'$ in $\Gamma$ do not cross; and
            \item[(ii)] if $\alpha$ is a bi-infinite edge path such that for any finite edge subpath there is a bi-infinite edge path in $\Gamma$ that contains it, then $\alpha\in \Gamma$.
        \end{itemize}
    \end{proposition}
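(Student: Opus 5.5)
The statement just given is \u{S}ari\'{c}'s Proposition~4.11, quoted from \cite{Saric:2021}. We would prove it by writing down explicit maps in both directions and checking that they are mutually inverse, using the fact recalled above: every bi-infinite edge path $\tilde\alpha$ of $\tilde\Theta$ has exactly two endpoints in $\partial_\infty\tilde\Sigma$, and so determines a geodesic $G(\tilde\alpha)$.

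\emph{From laminations to path families.} Let $L$ be a lamination weakly carried by $\Theta$, and let $\tilde L$ be its full preimage in $\tilde\Sigma$. Define $\Gamma(L)$ to be the set of all bi-infinite edge paths $\tilde\alpha$ with $G(\tilde\alpha)\in\tilde L$.
\begin{itemize}
\item First we would show that each leaf of $\tilde L$ is realized by exactly one edge path. The construction of $\Theta$ is adapted to the pants decomposition: each edge of $\tilde\Theta$ crosses lifts of pants curves in a controlled way. So an edge path is recovered from the ordered sequence of lifted pants curves that its geodesic crosses, together with the twisting data. Hence two distinct edge paths with the same endpoints cannot occur.
\item Condition (i) then follows. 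Leaves of $\tilde L$ do not cross, so their endpoint pairs are unlinked on the circle at infinity. Two edge paths that crossed would, by the same combinatorics, have linked endpoints.
\item For condition (ii), suppose every finite subpath of $\tilde\alpha$ lies in some path of $\Gamma(L)$. Take longer and longer subpaths, pick paths $\tilde\alpha_k\in\Gamma(L)$ containing them, and apply deck transformations so that a fixed edge stays in a compact set. The geodesics $G(\tilde\alpha_k)$ then converge to $G(\tilde\alpha)$, since edge paths that agree on long central segments have nearby endpoints. Because $\tilde L$ is closed, $G(\tilde\alpha)\in\tilde L$, so $\tilde\alpha\in\Gamma(L)$.
\end{itemize}

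\emph{From path families to laminations.} Given a family $\Gamma$ satisfying (i) and (ii), we would first replace it by its orbit under the deck group; we need to confirm that the hypotheses force, or are meant to include, $\pi_1$-invariance. Set $\tilde L=\{G(\tilde\alpha):\tilde\alpha\in\Gamma\}$.
\begin{itemize}
\item Condition (i) gives that these geodesics have unlinked endpoints, so they are pairwise disjoint.
\item Condition (ii) gives that $\tilde L$ is closed. A limit of geodesics $G(\tilde\alpha_k)$ is a geodesic $g$ whose edge path agrees with $\tilde\alpha_k$ on ever longer finite segments; this uses local finiteness of the pants decomposition. Then (ii) puts that path in $\Gamma$.
\item Projecting $\tilde L$ to $\Sigma$ gives a geodesic lamination, weakly carried by $\Theta$ by construction.
\end{itemize}
The two constructions are inverse to each other because $\tilde\alpha\mapsto G(\tilde\alpha)$ is injective.

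\emph{Main obstacle.} The hard step is the geometric dictionary between edge paths and geodesics:
\begin{itemize}
\item injectivity of $\tilde\alpha\mapsto G(\tilde\alpha)$;
\item crossing of paths being equivalent to crossing of the corresponding geodesics;
\item convergence of geodesics being equivalent to convergence of edge paths on finite segments.
\end{itemize}
On an infinite-type surface, the pants curves can have lengths tending to $0$ or $\infty$, so a uniform quasi-geodesic argument is unavailable. We would instead argue locally, in the finitely many pants that a compact set meets, and use only the fact that crossings with lifted pants curves determine the path.
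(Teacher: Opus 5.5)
The paper gives no proof of this statement. It is quoted from \cite{Saric:2021}, where it rests on the dictionary between edge paths and geodesics developed just before it: every path has two endpoints, and there is a convergence criterion, quoted here as Proposition~\ref{prop:convtogeod}. Your two-map outline is the natural proof. However, the step where you replace $\Gamma$ by its deck orbit fails, and your own worry is justified: conditions (i) and (ii) do not force invariance.

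Let $\tilde\alpha$ be the edge path of one lift of a simple closed geodesic $c$ that is weakly carried by $\Theta$. The family consisting of $\tilde\alpha$ alone (up to reparametrization) satisfies (i) vacuously. It satisfies (ii) because any bi-infinite path all of whose finite subpaths are literally subpaths of $\tilde\alpha$ in $\tilde\Theta$ is $\tilde\alpha$ itself. Its deck orbit also satisfies (i) and (ii), and both families yield the lamination $c$. So the correspondence as literally quoted is not injective. Your map $L\mapsto\Gamma(L)$ returns only the orbit, so your two constructions are not mutually inverse.

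Worse, let $c$ instead be a weakly carried closed geodesic with self-intersections. The one-element family still satisfies (i) and (ii), but its orbit violates (i), so orbit replacement does not even produce a lamination. Deck-invariance must therefore be imposed as a hypothesis on $\Gamma$; it cannot be derived. This is how the proposition is actually used in the paper, since $\Gamma_t=\overline{\mathcal O_t}$ is invariant by construction. With invariance assumed, drop the orbit step. Then $\widetilde{L(\Gamma)}=G(\Gamma)$, and injectivity of $\tilde\alpha\mapsto G(\tilde\alpha)$ gives $\Gamma(L(\Gamma))=\Gamma$.

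The rest is correct modulo the dictionary, which you assert rather than prove. Three points there need more than you give.

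\begin{itemize}
\item Your injectivity heuristic (crossing sequence plus twisting data) says nothing about paths that eventually run along a lifted cuff forever, namely the cuffs themselves and leaves spiraling onto them. For these the crossing sequence is finite on one side and the twist is infinite. You must argue separately that the terminal lifted pair of pants, the asymptotic cuff line, and the route onto it are forced by the endpoint and the switch directions.
\item In the closedness step you need the limit geodesic to be weakly carried at all. \u{S}ari\'{c}'s framework excludes geodesics exiting a cusp; this is the technicality noted after Proposition~\ref{prop:convtogeod}. So you should observe that pairwise non-crossing carried geodesics, which are simple, avoid the embedded cusp neighborhoods bounded by horocycles of length $2$, and hence so do their limits.
\item The convergence equivalence you propose to rederive is exactly Proposition~\ref{prop:convtogeod}; cite it in both (ii)-type steps. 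In the first of these no deck normalization is needed, since the paths $\tilde\alpha_k$ already contain the central edges of $\tilde\alpha$.
\end{itemize}
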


    Finally, \u{S}ari\'{c} also describes  when a sequence of geodesics converges to a lamination.
    
    \begin{proposition} [{\cite[Proposition 4.9]{Saric:2021}}] \label{prop:convtogeod}
        Let $g_n,g$ be geodesics in $\Tilde{\Sigma}$ weakly carried by a train track $\Tilde{\Theta}$. Denote by $\Tilde{\alpha}_n,\Tilde{\alpha}$ the corresponding bi-infinite edge paths in $\Tilde{\Theta}$. Then $g_n$ converges to $g$ as $n\rightarrow\infty$ if and only if for each finite subpath $\Tilde{\alpha}'$ of $\Tilde{\alpha}$ there exists an $n_0\geq 0$ such that $\Tilde{\alpha}'$ is contained in $\Tilde{\alpha}_n$ for all $n\geq n_0$. 
    \end{proposition}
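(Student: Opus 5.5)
The statement is a criterion for convergence of carried geodesics. I would prove it by translating convergence of geodesics in $\tilde\Sigma\cong\mathbb H^2$ into convergence of their endpoint pairs in $\partial_\infty\tilde\Sigma$. I would then compare this with the combinatorics of the lifted pants curves $\tilde\alpha_i$ that a carried geodesic crosses. The basic input is the argument behind \cite[Proposition 4.6]{Saric:2021}. Every edge of $\tilde\Theta$ lies in a lifted pair of pants or in a lifted collar. Consequently, a bi-infinite edge path crosses a sequence of lifts of pants curves, or else winds along one of them. These lifts are nested half-planes whose ideal arcs shrink to the endpoints of the path. This is exactly how one sees that the path has two well-defined endpoints, and hence that $G(\tilde\alpha)$ exists.

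\emph{($\Leftarrow$)} Suppose every finite subpath of $\tilde\alpha$ is eventually contained in $\tilde\alpha_n$. Fix $\varepsilon>0$.
\begin{enumerate}
\item Choose a finite subpath $\tilde\alpha'$ of $\tilde\alpha$ long enough that the lifted pants curves crossed near its two ends, say $\tilde\alpha_{-}$ and $\tilde\alpha_{+}$, cut off ideal arcs $I_-$ and $I_+$ of diameter less than $\varepsilon$. These arcs contain the two endpoints of $g$. In the winding case, I would instead use the nested arcs cut out by successive lifts of a seam or transversal in the collar.
\item For $n\ge n_0$ the path $\tilde\alpha_n$ contains $\tilde\alpha'$. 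Since a carried path cannot backtrack across a lifted pants curve, the remaining two rays of $\tilde\alpha_n$ stay in the half-planes bounded by $\tilde\alpha_\pm$. Hence the endpoints of $g_n$ lie in $I_\pm$, and so $g_n\to g$.
\end{enumerate}

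\emph{($\Rightarrow$)} Suppose $g_n\to g$, and let $\tilde\alpha'$ be a finite subpath of $\tilde\alpha$.
\begin{enumerate}
\item Show that the edge path carrying a geodesic is determined locally by the geodesic. Concretely, I would prove that $\tilde\alpha'$ is determined by a finite amount of data: the ordered list of lifted pants curves (and lifted seams) that $g$ crosses transversally in a compact region of $\tilde\Sigma$, together with which boundary components of each lifted pair of pants $g$ enters and exits, and its twisting in each crossed collar. This is the construction of the carrying path in \cite{Saric:2021}, read backwards.
\item Transverse intersection of a geodesic with a fixed geodesic is an open condition on its endpoint pair. Therefore, for $n$ large, $g_n$ crosses the same finitely many lifts in the same order. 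Its carrying path $\tilde\alpha_n$ then contains $\tilde\alpha'$.
\end{enumerate}

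I expect the main obstacle to be step 1 of ($\Rightarrow$) in the degenerate situation. This is where $g$ is asymptotic to, or equal to, a lift of a pants curve, so that $\tilde\alpha$ eventually winds around a cycle of $\Theta$ in a collar. Transversality then fails along that stretch. My first approach is to handle it via the seams. Winding $m$ times corresponds to crossing $m$ consecutive lifts of a seam arc in the collar, and these crossings are transverse. The $g_n$ therefore wind at least $m$ times for $n$ large, so the openness argument still applies.
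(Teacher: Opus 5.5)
The paper gives no proof of this proposition. It is quoted from \v{S}ari\'{c}'s Proposition 4.9, with the caveat that \v{S}ari\'{c}'s geodesics never exit a cusp; that caveat is why the paper replaces the arcs $\alpha_i$ by the closed curves $C_i$ before using it. So there is no in-paper argument to compare yours with. Your outline is the natural self-contained route: convergence of endpoints versus persistence of finitely many transverse crossings, with the nesting of lifted cuffs (the mechanism behind Proposition 4.6) giving the converse. It buys independence from the citation. The cost is that you must establish structural properties of \v{S}ari\'{c}'s particular track $\Theta$, which the paper never has to touch.

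The claims your sketch relies on but does not yet prove are these.

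\emph{The locality claim in ($\Rightarrow$).} Weak carrying is defined only through ideal endpoints, so there is no construction of the carrying path to ``read backwards.'' You need three facts:
\begin{itemize}
\item a bi-infinite edge path of $\tilde\Theta$ with prescribed endpoints is unique;
\item it meets exactly the lifted cuffs separating those endpoints;
\item between two consecutive such lifts it is forced by that pair of lifts. These two lifts bound a common convex lifted pair of pants, so nothing else is crossed between them.
\end{itemize}
Together with the no-backtracking property you use in ($\Leftarrow$), this is a no-bigon statement for $\tilde\Theta$ and must be checked for this track. It is also what makes ``the corresponding edge path'' in the statement well defined.

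\emph{The dichotomy.} The claim that a path either crosses infinitely many lifted cuffs or eventually winds along one does not follow merely from edges lying in lifted pants or collars. You must rule out paths that stay in one lifted pair of pants forever by repeatedly meeting a boundary lift and leaving on the same side. Alternatively, treat such paths with nesting regions cut out by lifted seams, as you propose for the winding case.

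\emph{The case where $g$ is itself a lift of a cuff.} Such a $g$ meets the lifted seams only at their endpoints, so it crosses none of them transversally, and nearby $g_n$ on the other side need not cross them at all. In ($\Rightarrow$) the transverse crossings must therefore come from arcs crossing the whole collar, as you already suggest for ($\Leftarrow$), not from seams.
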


One technicality that will be important for this paper is that in the discussion surrounding \cite[Proposition 4.9]{Saric:2021}, \u{S}ari\'{c} assumes that the geodesics do not run out a cusp at either end; in particular, the geodesics in \cite{Saric:2021} do not start or end at a puncture on $\Sigma$.

We consider the space of geodesic laminations with the \textit{coarse Chabauty topology} \cite{CEG06}; see \cite{ABNV} for a discussion of this topology in the context of infinite-type surfaces. As we will not need the full definition, we simply define convergence in this topology.

\begin{lemma}
    A sequence $L_i$ of geodesic laminations converges to a geodesic lamination $L$ in the coarse Chabauty topology if for every leaf $\ell$ of $L$, there is a sequence of leaves $\ell_i\in L_i$ such that $\ell_i\to \ell$ as geodesics in $\mathbb H^2$.
\end{lemma}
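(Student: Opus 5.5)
The plan is to unwind the definition of the coarse Chabauty topology from \cite{CEG06} (see also \cite{ABNV}) and check that leafwise convergence implies convergence in that topology. Recall that a geodesic lamination $\lambda$ on $\Sigma$ determines a closed subset $\widehat\lambda$ of the projective tangent bundle $P\Sigma$, namely the set of tangent lines to its leaves. The coarse Chabauty topology is the ``lower half'' of the Chabauty topology on closed subsets of $P\Sigma$. A basis of neighborhoods of $L$ is given by the sets
\[
N(L;K,\varepsilon)=\{\lambda \,:\, \widehat L\cap K\subset N_\varepsilon(\widehat\lambda)\},
\]
where $K\subset P\Sigma$ is compact and $\varepsilon>0$. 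So it suffices to show the following: if every leaf of $L$ is a limit of leaves $\ell_i\in L_i$, then for each such $K$ and $\varepsilon$ we have $L_i\in N(L;K,\varepsilon)$ for all large $i$.

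First I will make precise what ``$\ell_i\to\ell$ as geodesics in $\mathbb H^2$'' means. Choose lifts $\tilde\ell_i,\tilde\ell$ to $\widetilde\Sigma\cong\mathbb H^2$ whose endpoints on $\partial_\infty\mathbb H^2$ converge. This is equivalent to the following statement: for every compact subsegment $\sigma\subset\tilde\ell$ and every $\delta>0$, for all large $i$ each tangent line of $\sigma$ lies within $\delta$ of a tangent line of $\tilde\ell_i$ in $P\mathbb H^2$. This follows from the continuous dependence of geodesics in $\mathbb H^2$ on their endpoints, which gives uniform convergence on compact sets. Since the covering map $P\mathbb H^2\to P\Sigma$ is $1$-Lipschitz, the same statement holds after projecting to $P\Sigma$.

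Next comes a compactness argument, which is the main (though mild) step, because the hypothesis is only leafwise while the neighborhoods require uniformity over $\widehat L\cap K$. The set $\widehat L\cap K$ is compact. Cover it by finitely many balls $B(v_j,\varepsilon/2)$, $j=1,\dots,m$, with each $v_j\in\widehat L\cap K$. Each $v_j$ is tangent to some leaf $\ell^j$ of $L$ at some point. Choose a compact segment $\sigma_j\subset\ell^j$ through that point. By hypothesis there are leaves $\ell^j_i\in L_i$ with $\ell^j_i\to\ell^j$. By the previous paragraph, there is an $i_j$ such that for $i\geq i_j$ the vector $v_j$ lies within $\varepsilon/2$ of $\widehat{\ell^j_i}\subset\widehat{L_i}$. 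Set $i_0=\max_j i_j$. For $i\ge i_0$, any $w\in\widehat L\cap K$ lies within $\varepsilon/2$ of some $v_j$, and hence within $\varepsilon$ of $\widehat{L_i}$. Thus $L_i\in N(L;K,\varepsilon)$.

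Finally, I will remark that only finitely many leaves are used for each $(K,\varepsilon)$. Consequently, no uniformity in the choice of the sequences $\ell_i$ across different leaves is needed. This is exactly why the lemma is phrased leaf by leaf. I do not expect any real obstacle beyond stating the definition of the topology consistently.
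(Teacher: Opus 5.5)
Your argument is correct, but it does something the paper never does. Despite the \texttt{lemma} label, the paper gives no proof of this statement: the sentence introducing it says ``we simply define convergence in this topology.'' The leafwise criterion is adopted as the working definition, with \cite{CEG06} and \cite{ABNV} cited for the actual topology, and is then applied directly at the end of the proof of Theorem~\ref{thm:convergence}.

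Your proposal supplies the missing link to the standard topology, in four steps:
\begin{enumerate}
\item You identify the coarse Chabauty topology with the lower half of the Chabauty topology on closed subsets of $P\Sigma$.
\item You describe neighborhoods by the sets $N(L;K,\varepsilon)$; these are indeed a neighborhood basis for that topology.
\item You convert endpoint convergence of lifts into uniform closeness of tangent lines along compact segments.
\item You push this down through the locally isometric covering.
\end{enumerate}
What this buys is a genuine justification that the lifts constructed in Theorem~\ref{thm:convergence} give convergence in the topology of the cited references. It also explains why no uniformity across leaves is needed.

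Two small remarks. First, the finite-cover compactness step is more than you need. Sequential convergence can be checked on the subbasic sets $\{\lambda : \widehat\lambda\cap U\neq\emptyset\}$ one at a time, and each of these involves a single point of $\widehat L$, hence a single leaf. Second, some sources formulate the topology as ``every Chabauty sublimit of $(\widehat{L_i})$ contains $\widehat L$.'' Your conclusion implies that version in one line, so your argument does not depend on which standard description is used.
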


\subsection{Codes of arc}
    Let $\Sigma$ be a surface that admits a shift map and has an isolated puncture $p$. The \emph{front} of $\Sigma$ is an open connected neighborhood of an embedded copy of $S'$ as in Definition \ref{shift}, which we can assume without loss of generality contains $p$. The complement of the front of $\Sigma$ is called the \emph{back} of $\Sigma$. We label the punctures in $S'$ by integers $\{k_i\}_i$. Let $\alpha$ be an oriented arc on the front of $\Sigma$ that starts and ends at $p$. In \cite[Section 3.1]{Abbott_2025}, it was shown how to assign a code to $\alpha$ that completely determines $\alpha$ up to homotopy. Intuitively, the characters of the code are determined by following the path of $\alpha$ from its start to its end, both of which are at $p$. The code contains the character $i_o$ if $\alpha$ passes over the corresponding puncture $k_i$ and $i_u$ if $\alpha$ passes under $k_i$. If $\alpha$ does not pass over or under $k_i$, it does not contain either $i_o$ or $i_u$. When it is unimportant whether $\alpha$ passes over or under the puncture $k_i$, we use the character $i_{o/u}$. An example of the code is below.

    \begin{example}\label{ex:code} The code for the arc in  Figure \ref{fig:code_eg} is 
    \[P_s0_o\cdots (-n+3)_o(-n+3)_u \cdots0_u1_u2_u3_u3_o2_u1_u0_u\cdots(-n+3)_u(-n+3)_o\cdots0_oP_s\]
        \begin{figure}[H]
            \centering
            \def\svgwidth{4in}
            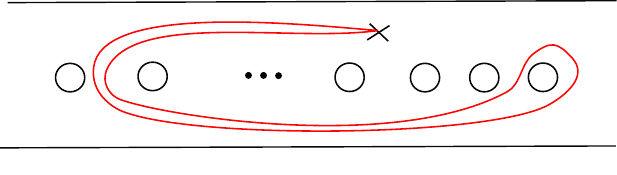
            \caption{The arc in Example~\ref{ex:code}.}
            \label{fig:code_eg}
        \end{figure}
    \end{example}

    A code is \textit{reduced} if it does not contain the same  character twice in a row.
    Given an arc $\alpha$, we let $\ell(\alpha)$ denote the number of characters in a reduced code for $\alpha$. We refer to $\ell(\alpha)$ as the \textit{code length} of $\alpha$.  

\subsection{The boundary of the relative arc graph}\label{sec:relarcgraph}
    The \textit{relative arc graph} of an infinite-type surface $\Sigma$ with an isolated puncture $p$, denoted $\mathcal A(\Sigma,p)$, is graph whose vertices are isotopy classes of simple arcs starting and ending at $p$, with edges corresponding to disjointness.  This graph was defined by Aramayona, Fossas, and Parlier, and is an infinite-diameter hyperbolic graph on which $\Map(\Sigma,p)$, the subgroup fo $\Map(\Sigma)$ fixing $p$, acts by isometries \cite{AFP}.  

    Bavard and Walker  show that the boundary of $\mathcal A(\Sigma,p)$ is homeomorphic to the space of cliques of \textit{high-filling rays} \cite{BavardWalker}.  A ray is \textit{1-filling} if it is disjoint from a loop, that is, from a ray that starts and ends at $p$.  A ray is \textit{2-filling} if it intersects every loop and is disjoint from a 1-filling ray.  Finally, a ray is \textit{high-filling} if it is not a loop and is neither 1- or 2-filling \cite[Lemma~5.6.4]{BavardWalker}.  The number of high-filling rays in the clique is called the \textit{weight} of the boundary point.

    It follows from work in \cite{AFP} that every compactly supported pseudo-Anosov mapping class is a loxodromic isometry of $\mathcal A(\Sigma,p)$, that is, it acts as translation along a quasi-geodesic axis.  To determine the weight of the limit point of such a homeomorphism $\phi$, one considers the foliation preserved by $\phi$.  The weight of the limit point of $\phi$ is equal to the number of prongs in the foliation at the singularity $p$.   Intuitively, each prong corresponds to a high-filling ray starting at $p$, entering a path between leaves of the foliation at the singularity and then following the foliation.  The high-filling rays constructed in this manner are pairwise disjoint, and hence form a clique. 
    
\section{The limiting lamination}\label{sec:main}
    Following the setup in \cite{Abbott_2025}, we fix a surface $\Sigma$ that admits a shift map $h$ and has an isolated puncture $p$ fixed by $h$. The shift induces an embedding of the surface $S$ into $\Sigma$, where $S$ is as in Definition~\ref{shift}, and restricts to a shift on $S$, which we also call $h$.  By construction, $S$ has a countable collection of compact boundary components, which we label $B_i$ for $i\in \mathbb Z$,  where $h(B_i)=B_{i+1}$. For the remainder of the paper, fix $n\geq 2$, and let $\chi=\chi_n$ be the blue curve in Figure \ref{fig:ChiShift}.  Note that $\chi$ encloses the $n\geq 2$ boundary components $B_{-n+2}, B_{-n+1},\dots, B_0,B_1$. The restriction of the domain of $h$ to $S$ is shown in  Figure~\ref{fig:ChiShift}.
    
     \begin{figure}[H]
         \centering
         \begin{overpic}[width=.7\linewidth]{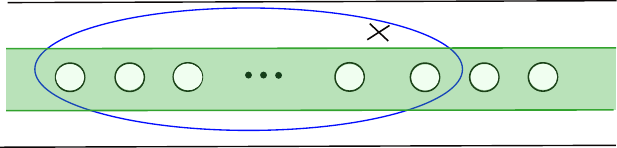}
         \put(10,14){\footnotesize $B_{n-2}$}
         \put(20,14){\footnotesize $B_{n-1}$}
         \put(30,14){\footnotesize $B_{n-3}$}
         \put(51,14){\footnotesize $B_{0}$}
         \put(63,14){\footnotesize $B_{1}$}
         \put(73,14){\footnotesize $B_{2}$}
         \put(83,14){\footnotesize $B_{3}$}
         \put(15,2){{\color{blue}$\chi$}}
         \put(62,21){$p$}
         \end{overpic}
         \caption{The simple closed curve $\chi$, in blue, and the surface $S$, in green.}
         \label{fig:ChiShift}
     \end{figure}
    
    \noindent  Let 
    \[g=g_n= h\circ T_{\chi}\in \Map(\Sigma).\]
    We emphasize that $\chi$ and $g$ both depend on $n$; for simplicity, and since $n$ is fixed, we will suppress the dependence in our notation.

    \begin{figure}[H]
         \centering
         \includegraphics[width=.7\linewidth]{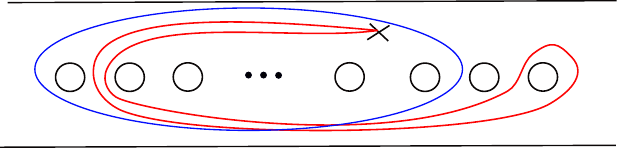}
         \caption{The simple closed curve $\chi$, in blue, and the arc $\alpha_0$, in red.}
         \label{fig:Chialpha0}
     \end{figure}

    Fix an arc $\alpha_0$, as in Figure~\ref{fig:Chialpha0}, and let $\alpha_i=g^{i}(\alpha_0)$ for $i\geq 0$.  Our goal is to use the reduced code for each $\alpha_i$ to find the code for its image under $g$.  While it is possible to find the image of each character in the code one by one and then concatenate the result, this code is rarely reduced.  The problem is that we are considering homotopy classes of arcs; finding a reduced code corresponds to pulling the arc tight. From the point of view of the code, in the unreduced code for the image of an arc, there may be cancellation among various strings of characters. For example, the image of $P_s1_o2_u2_0...$ is 
    \[P_s1_o1_oP_o0_o\cdots(-n+3)_o(-n+3)_u\cdots3_u3_o...,\]
    which is homotopic to $P_s0_o\cdots(-n+3)_o(-n+3)_u\cdots3_u3_o$; see Figure \ref{fig:cancellation}, where the unreduced image is dotted and the reduced solid.  

    \begin{figure}[H]
        \centering
        \includegraphics[width=.9\linewidth]{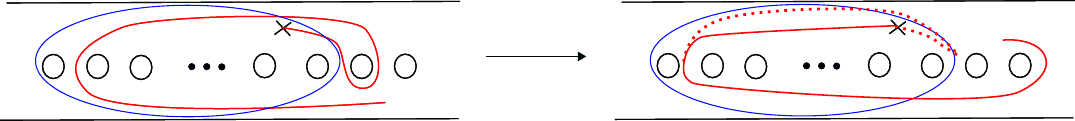}
        \caption{Cancellation via homotopy.  The homotopy class of curve on the right is the image of (an initial portion of) the curve on the left under $g$.  The code for the dotted arc on the right is unreduced, while the code for the solid arc on the right is reduced.}
        \label{fig:cancellation}
    \end{figure}

    How this cancellation can occur, as well as which characters prevent or ``block" cancellation, was investigated in depth in \cite{Abbott_2022,Abbott_2025}. The following lemma from \cite{Abbott_2022} is useful for direct computation of reduced codes of images of curves under $g$.  

    \begin{lemma}[{\cite[Lemma 3.1.3]{Abbott_2022}}]\label{block_AMP}
        If $\delta$ is an arc on $\Sigma$ containing the characters $k_{o/u}$ for any $[2,\infty)$, then there is no cancellation between the images under $g$ of the portion of $\delta$ before $k_{o/u}$ and the portion after $k_{o/u}$.
    \end{lemma}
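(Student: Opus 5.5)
This lemma is quoted from \cite[Lemma 3.1.3]{Abbott_2022}. To see why it holds for $g=h\circ T_\chi$, I would argue directly from the code description of the image of an arc under $g$. The plan is to compute the image of the character $k_{o/u}$ with $k\ge 2$, and then show that this image contains a segment that no homotopy can pull across.

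The first step is to locate the support of the twist. Up to isotopy, the curve $\chi$ only meets the part of the front containing $p$ and the boundary components $B_{-n+2},\dots,B_1$. A strand of $\delta$ passing over or under the puncture $k\geq 2$ lies to the right of $\chi$, so we can isotope it off the annulus where $T_\chi$ is supported. Hence $T_\chi$ fixes that strand, and $h$ then moves it to a strand passing over or under $k+1\geq 3$. The image code therefore contains the single character $(k+1)_{o/u}$ in the position where $k_{o/u}$ appeared. No character of this image lies inside the region bounded by $\chi$ or its image.

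The second step is to write the unreduced image code as $g(\delta_-)\,(k+1)_{o/u}\,g(\delta_+)$, where $\delta_\pm$ are the portions of $\delta$ before and after $k_{o/u}$. Cancellation means that a terminal string of $g(\delta_-)$ cancels against an initial string of $g(\delta_+)$. For this to happen, the intervening character $(k+1)_{o/u}$ would itself have to cancel, that is, be adjacent after reduction to a matching $(k+1)_{o/u}$ going back the other way. Such a backtrack would correspond to a bigon between $g(\delta)$ and a transversal to the right of $\chi$. Since $g$ is a homeomorphism, such a bigon would pull back to a bigon for $\delta$ across $k_{o/u}$. If $\delta$ is reduced, that bigon does not exist.

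The main obstacle is the case where the images of the twisted portions, which wrap around $\chi$ and produce long strings $0_o\cdots(-n+3)_o(-n+3)_u\cdots 3_u3_o$, reach out as far as $3_{o/u}$ and might meet the character $(k+1)_{o/u}$. Here I would check explicitly, as in the computation illustrated in Figure~\ref{fig:cancellation}, which puncture characters an image of a twisted segment can end on. The expected pattern is that these images return through $3_o$ or $3_u$ toward $p$, so they cannot reduce the character $(k+1)_{o/u}$ with $k+1\geq 3$ from both sides at once. Reducing it from both sides is what cancellation between $g(\delta_-)$ and $g(\delta_+)$ would require. Concretely, I would treat this as a bigon argument in the front, namely a minimal-position check for the shifted transversal arc through puncture $k+1$.
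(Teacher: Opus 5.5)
\textbf{There is a gap, but your second step already contains the fix.} The paper gives no proof of this lemma; it imports it from \cite{Abbott_2022}, so your argument has to stand on its own. The pull-back in step two is incomplete as written. Your last paragraph, which you present as the main obstacle, would not close the argument.

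\textbf{The pull-back needs the whole transversal.} Write $t_j$ for the transversal at puncture $j$ whose crossings are recorded by the relevant character $j_{o/u}$. A bigon between a representative of $g(\delta)$ and $t_{k+1}$ pulls back to a bigon between $\delta$ and $g^{-1}(t_{k+1})$. This contradicts reducedness of $\delta$ only if $g^{-1}(t_{k+1})$ is isotopic to $t_k$. Knowing that $T_\chi$ fixes one strand of $\delta$ does not give this. You need the statement for the whole transversal: for every $k\ge 2$, $t_k$ is disjoint from $\chi$, which passes between $B_1$ and $B_2$. So $T_\chi$ can be supported away from $t_k$, and $g(t_k)=h(t_k)\simeq t_{k+1}$.

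\textbf{Use intersection numbers rather than a literal bigon.} The bigon you want to pull back may exist only for a partially reduced representative, not for $g(\delta)$ itself. Choose a representative of $g$ with $g(t_k)=t_{k+1}$. Then the $(k+1)_{o/u}$ characters of the unreduced image word are exactly the images of the $k_{o/u}$ characters of $\delta$. Moreover $|g(\delta)\cap t_{k+1}|=|\delta\cap t_k|=i(\delta,t_k)=i(g(\delta),t_{k+1})$, where the middle equality holds because $\delta$ is reduced. Hence no $(k+1)_{o/u}$ character is deleted in any reduction sequence, so the particular character $g(k_{o/u})$ survives. Therefore no character of $g(\delta_-)$ can ever become adjacent to one of $g(\delta_+)$, and this already proves the lemma.

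\textbf{Your last paragraph should be dropped, not carried out.} The twisted images are simply part of $g(\delta_\pm)$, and the argument above never treats them separately. In fact $g^{-1}(t_j)\simeq t_{j-1}$ for all $j\ge 3$, so twisted images contain no character $j_{o/u}$ with $j\ge 3$ at all. In the example preceding Figure~\ref{fig:cancellation}, the terminal $3_u3_o$ is $g(2_u2_o)$, not part of the image of $1_o$.

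\textbf{The criterion you propose there is also wrong.} Cancellation between $g(\delta_-)$ and $g(\delta_+)$ does not require the middle character to be reduced ``from both sides at once.'' It is enough that it cancels against a single matching character on one side; the rest of $g(\delta_-)$ then abuts $g(\delta_+)$. So ruling out two-sided reduction proves nothing, and an ``expected pattern'' is not a verification. It is the intersection-number argument that rules out even one-sided cancellation.
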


    Given a pair $k_{o/u}k_{u/o}$ with $k\geq 2$ and applying Lemma \ref{block_AMP} first to $k_{o/u}$ and then to $k_{u/o}$, we obtain the following corollary.

    \begin{cor}\label{block cancellation}
        For $k\geq 2$, we have
        \[g(k_{o/u}k_{u/o})=(k+1)_{o/u}(k+1)_{u/o},\]
        and there is no cancellation between the images under $g$ of the portion of $\delta$ before $k_{o/u}k_{u/o}$ and the portion after $k_{o/u}k_{u/o}$.
    \end{cor}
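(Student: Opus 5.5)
The corollary follows by applying Lemma~\ref{block_AMP} twice and then computing the image of the pair itself. Let $\delta$ be an arc whose code contains the consecutive pair $k_{o/u}k_{u/o}$ with $k\geq 2$. Write its code as $\delta = \delta_1\, k_{o/u}\, k_{u/o}\, \delta_2$, where $\delta_1$ and $\delta_2$ are the prefix and suffix strings.

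First apply Lemma~\ref{block_AMP} to the character $k_{o/u}$. This shows there is no cancellation between $g(\delta_1)$ and $g(k_{o/u}k_{u/o}\delta_2)$. Next apply the lemma to $k_{u/o}$, which is also a character $k_{o/u}$ with $k\geq 2$. This shows there is no cancellation between $g(\delta_1k_{o/u})$ and $g(\delta_2)$. Together, the reduced code of $g(\delta)$ is the concatenation of the reduced codes of $g(\delta_1)$, $g(k_{o/u}k_{u/o})$ and $g(\delta_2)$. The only remaining concern is whether some cancellation could reach across both blocking characters at once. I would rule this out by noting that each application of the lemma shows the corresponding character survives in the image as a barrier. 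Any homotopy reduction crossing the pair would have to cross one of these barriers, which the lemma forbids.

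It remains to compute $g(k_{o/u}k_{u/o})$ for $k\geq 2$. The curve $\chi$ encloses $B_{-n+2},\dots,B_1$ and the puncture $p$, so a segment passing over or under the $k$-th shifted subsurface with $k\geq 2$ can be isotoped off $\chi$ locally. On this segment $T_\chi$ therefore acts as the identity. The shift $h$ sends the $k$-th subsurface to the $(k+1)$-st and preserves the over/under side. Hence the local segment $k_{o/u}k_{u/o}$ maps to $(k+1)_{o/u}(k+1)_{u/o}$, with the same sides in the same order. This pair is still reduced, since its two characters differ in the over/under label.

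The main subtlety is justifying that the local image of the pair is exactly this two-character string. The Dehn twist's contribution is absorbed into the images of $\delta_1$ and $\delta_2$, and the lemma guarantees these do not interact with the middle portion. So the middle portion is exactly the image under $h$, namely $(k+1)_{o/u}(k+1)_{u/o}$.
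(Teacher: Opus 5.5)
Your proposal is correct and follows the paper's route: the paper gets the corollary in one line by applying Lemma~\ref{block_AMP} first to $k_{o/u}$ and then to $k_{u/o}$. You add an explicit local computation of $g(k_{o/u}k_{u/o})=(k+1)_{o/u}(k+1)_{u/o}$, which the paper leaves implicit: for $k\geq 2$ the U-turn around the $k$th subsurface is disjoint from $\chi$, so $T_\chi$ fixes it and $h$ translates it. Your extra worry about cancellation reaching across both characters is already ruled out by the first application of the lemma.
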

    \noindent  Following the terminology in \cite{Abbott_2022,Abbott_2025}, we say that a pair $k_{o/u}k_{u/o}$ with $k\geq 2$, \emph{blocks cancellation}. 
     
     In light of this corollary, we define the \emph{initial segment} of an arc $\alpha_i$ to be the sub-segment of the code of $\alpha_i$ beginning with $P_s$, ending with a pair $k_{o/u}k_{u/o}$  with $k\geq 2$ and containing exactly one copy of $1_{o/u}2_{o/u}$. Essentially, this is the part of the code for $\alpha_i$ that starts at the puncture and ends at the first pair that blocks cancellation. For example, the initial segment of $\alpha_0$, which will play an important role in this paper, is 
     \[
     \delta_0:=P_s0_o\cdots (-n+3)_o(-n+3)_u \cdots0_u1_u2_u3_u3_o
     \] when $n\geq 3$.  When $n=2$, the initial segment is $\delta_0=P_s1_u2_u3_u3_o$.  The top left of Figure \ref{fig:ini} shows $\delta_0$ in red as a subpath of $\alpha_0$ for $n\geq 3$. Note that the arc $\alpha_0$ is symmetric about the final $3_u3_o$ pair in $\delta_0$.  In particular, $\ell(\delta_0)=\frac12\ell(\alpha_0)+1$.
     For $i\geq 1$, the initial segment of $\alpha_i$ is typically shorter than the first half of $\alpha_i$.

    \begin{lemma}
        The image of the initial segment of $\alpha_i$ contains the initial segment of $\alpha_{i+1}$.
    \end{lemma}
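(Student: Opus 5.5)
Write $\alpha_i = \delta_i\,\rho_i$ as a concatenation of codes, where $\delta_i$ is the initial segment of $\alpha_i$. By definition, $\delta_i$ ends with a pair $k_{o/u}k_{u/o}$ with $k\geq 2$. The plan is to reduce the statement to a claim about where the reduced code of $g(\delta_i)$ is cut off, using the blocking results stated earlier.

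The first step is to split $\alpha_{i+1}=g(\alpha_i)$ at the image of this final pair. By Corollary~\ref{block cancellation}, the final pair $k_{o/u}k_{u/o}$ of $\delta_i$ maps to $(k+1)_{o/u}(k+1)_{u/o}$. The same corollary says there is no cancellation between the image of the portion of $\alpha_i$ before this pair and the image of the portion after it. Hence the reduced code of $\alpha_{i+1}$ is the reduced code of $g(\delta_i)$ followed by the reduced code of $g(\rho_i)$, with no interaction between them. In particular, the reduced code of $g(\delta_i)$ is an initial subword of the reduced code of $\alpha_{i+1}$. It begins with $P_s$, since $g$ fixes $p$ and the arc still starts there, and it ends with the blocking pair $(k+1)_{o/u}(k+1)_{u/o}$, where $k+1\geq 3$.

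The second step identifies the initial segment of $\alpha_{i+1}$: it is the shortest prefix starting with $P_s$ that ends at a blocking pair and contains exactly one copy of $1_{o/u}2_{o/u}$. Since $g(\delta_i)$ is a prefix ending with a blocking pair, it suffices to show that it contains at least one occurrence of $1_{o/u}2_{o/u}$. Once that holds, the prefix of $\alpha_{i+1}$ ending at the first blocking pair after the first $1_{o/u}2_{o/u}$ lies inside $g(\delta_i)$. That prefix is exactly $\delta_{i+1}$, which proves the lemma.

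The main obstacle is showing that a $1_{o/u}2_{o/u}$ survives in the reduced code of $g(\delta_i)$. I would handle this by a direct character-by-character analysis of $g=h\circ T_\chi$. The characters of $\delta_i$ from its unique $1_{o/u}2_{o/u}$ up to the final blocking pair pass around $B_1$ and then $B_2,\dots$. Under $T_\chi$ followed by the shift, the passage near $B_1$ becomes a passage near $B_2$, so it contributes a $2_{o/u}$. The twist about $\chi$ contributes a traversal of the form $P_o0_o\cdots(-n+3)_o(-n+3)_u\cdots 1_u2_u$, as in the cancellation example above. I expect the twisted strand to always supply a $1_{o/u}2_{o/u}$ immediately before the image of the characters following $2_{o/u}$. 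The strand cannot cancel with what comes after, because the next character of $\delta_i$ beyond $2$ eventually reaches the blocking pair and Lemma~\ref{block_AMP} applies. Cancellation with what comes before only eats characters at or below index $1$, as in Figure~\ref{fig:cancellation}. I would verify this by splitting into cases according to whether the character following the $1_{o/u}2_{o/u}$ in $\delta_i$ is already the blocking pair or is $3_{o/u}$ with a longer run. I would check the base case $\delta_0$ by hand, separately for $n=2$ and $n\geq 3$.
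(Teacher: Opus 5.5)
Your reduction is correct and is the same as the paper's. By Corollary~\ref{block cancellation}, the reduced code of $g(\delta_i)$ is a prefix of the reduced code of $\alpha_{i+1}$. This prefix begins with $P_s$ and ends with the blocking pair $(k+1)_{o/u}(k+1)_{u/o}$, where $k+1\geq 3$. So everything reduces to showing that this prefix contains a $1_{o/u}2_{o/u}$. You make the prefix claim more explicit than the paper does, which is fine.

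The gap is that you never prove this step. In its place you give a plan that rests on expectations rather than arguments. You write ``I expect the twisted strand to always supply a $1_{o/u}2_{o/u}$'' and assert that ``cancellation with what comes before only eats characters at or below index $1$'', but justify neither. Your proposed case split would also need the shape of $\delta_i$ for arbitrary $i$, which the definition of initial segment does not give you. The explicit forms are only computed for $i\leq n$, in the proof of Lemma~\ref{ini_con}, and that proof itself relies on the present lemma being available. As written, the argument is incomplete at its central step.

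The missing idea is that no analysis of $T_\chi$ or of cancellation is needed. You have already shown that the reduced code of $g(\delta_i)$ starts at $P_s$ and ends with a pair of index $k+1\geq 3$. An arc leaving $p$ can only reach puncture $k+1\geq 3$ by passing puncture $1$ and then puncture $2$, one puncture at a time. Hence the first character of index $2$ in the reduced code is immediately preceded by a character of index $1$. In the paper's words, ``to travel from $p$ to $(k+1)_{o/u}(k+1)_{u/o}$, the arc has to contain at least one instance of the pair $1_{o/u}2_{o/u}$.'' With that one observation, your second step finishes the proof.
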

    
    \begin{proof}
        By definition, the initial segment of $\alpha_i$ ends with a $k_{o/u}k_{u/o}$ pair with $k\geq 2$. By Corollary \ref{block cancellation}, the image of the initial segment of $\alpha_i$ ends with
        \[g(k_{o/u}k_{u/o})=(k+1)_{o/u}(k+1)_{u/o}.\]To travel from $p$ to $(k+1)_{o/u}(k+1)_{u/o}$, the arc $\alpha_{i+1}$ has to contain at least one instance of the pair  $1_{o/u}2_{o/u}$. Hence, the initial segment of $\alpha_{i+1}$ is contained in the image of the initial segment of $\alpha_i$.  
    \end{proof}

    The following lemma will be our main technical tool in proving Theorem~\ref{main}. 
    
    \begin{lemma}\label{ini_con}
        If $k\geq 0$, then the initial segment of $\alpha_{nk}=g^{nk}(\alpha_0)$ is $\delta_0$. 
    \end{lemma}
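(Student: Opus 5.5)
We argue by induction on $k$, and it suffices to track the initial segment alone. By the preceding lemma and Corollary~\ref{block cancellation}, the initial segment of $\alpha_{i+1}$ is determined by the image under $g$ of the initial segment of $\alpha_i$. The part of $\alpha_i$ after the terminal blocking pair $k_{o/u}k_{u/o}$ (with $k\geq 2$) cannot cancel with anything before it. So define a map $\Phi$ on initial segments: $\Phi(\delta)$ is the initial segment of the reduced code of $g(\delta)$. Then the initial segment of $\alpha_{i+1}$ is $\Phi$ applied to the initial segment of $\alpha_i$. The lemma follows once we show $\Phi^n(\delta_0)=\delta_0$. The case $k=0$ is the definition of $\delta_0$, and the inductive step $nk\to n(k+1)$ is then immediate.

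To compute $\Phi^n(\delta_0)$ we apply $g=h\circ T_\chi$ character by character, as in the example preceding Figure~\ref{fig:cancellation}. First apply $T_\chi$: a strand of the arc that crosses into the region bounded by $\chi$ picks up a full loop around the boundary components $B_{-n+2},\dots,B_1$. In code this inserts strings of the form $0_o\cdots(-n+3)_o(-n+3)_u\cdots 1_u$, or their reverses. Then apply $h$, which shifts every index by one. After this we reduce, cancelling adjacent repeated characters, and cut at the first blocking pair. I would compute $\delta_1=\Phi(\delta_0)$ explicitly and expect a pattern of the following shape. The leading run $0_o\cdots(-n+3)_o$ is shifted to start at $1$, while the twist contributes a new nested loop whose lowest index is one larger. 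After $j$ applications, the string of ``outer'' characters around $p$ should have moved right by $j$. Concretely, $\delta_j$ should have the form
\[P_s\, j_o\cdots (-n+3+j)_o(-n+3+j)_u\cdots j_u \cdots\]
followed by a short tail ending in a blocking pair. The precise tail is read off from the cancellation computation; it is governed by Lemma~\ref{block_AMP}, since any character with index at least $2$ blocks cancellation.

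The key observation for closing the cycle is that $\chi$ encloses exactly $n$ of the shifted boundary components. After $n$ shifts, the loop that was around $B_{-n+2},\dots,B_1$ has been pushed entirely past $B_1$. Its characters then all have index at least $2$, so they become blocking characters lying beyond the first blocking pair and drop out of the initial segment. What remains before the first blocking pair is exactly the fresh twist loop inserted at the $n$-th step together with $1_u2_u3_u3_o$, and this is $\delta_0$. The case $n=2$, where $\delta_0=P_s1_u2_u3_u3_o$, I would check separately by direct computation of $\Phi^2(\delta_0)$.

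The main obstacle is the bookkeeping of cancellation at each of the intermediate steps $j=1,\dots,n-1$. One must verify that the copy of $T_\chi$ inserted at step $j$ cancels against the shifted residue of the earlier loops exactly as predicted, so that the length and form of $\delta_j$ follow the claimed pattern. This requires tracking where the blocking pair sits at each step and confirming that no character with index at least $2$ appears earlier than expected. I would first work out $n=3$ by hand using figures like Figure~\ref{fig:cancellation}, identify the invariant form of $\delta_j$, and then prove that form holds for $0\leq j\leq n$ by an inner induction on $j$.
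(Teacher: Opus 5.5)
\textbf{Your reduction is correct and matches the paper's.} Because the initial segment ends in a pair $k_{o/u}k_{u/o}$ with $k\geq 2$, Corollary~\ref{block cancellation} makes the reduced image of the initial segment of $\alpha_i$ a prefix of $\alpha_{i+1}$. So the lemma reduces to $\Phi^n(\delta_0)=\delta_0$, and the induction on $k$ then follows immediately.

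\textbf{The gap is the computation itself.} That computation is the entire content of the lemma, and you do not carry it out. Moreover, the invariant you propose to prove by inner induction is false. The paper's direct computation for $n\geq 3$ gives, for $1\leq j\leq n-3$,
\[\Phi^j(\delta_0)=P_s0_o(-1)_o\cdots(-n+3+j)_o(-n+3+j)_u\cdots 0_u1_u2_u2_o,\]
read as $P_s0_o0_u1_u2_u2_o$ when $j=n-3$. After that, $\Phi^{n-2}(\delta_0)=P_s1_u2_u2_o$ and $\Phi^{n-1}(\delta_0)=P_s1_o2_u2_o$, and finally $g(P_s1_o2_u2_o)$ begins with $\delta_0$. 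So the run is not shifted to start at $j_o$. It stays anchored at $0_o$ while its far end advances by one per step, and the intermediate segments all end in $2_u2_o$.

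\textbf{Your closing mechanism also cannot work as stated.} The initial segment is a prefix of the reduced code, and the old loop sits immediately after $P_s$. Its characters therefore cannot migrate to lie ``beyond the first blocking pair''; they can only leave the initial segment by being removed in the reduction. In fact the loop never gets ``pushed past $B_1$''. It shrinks by one puncture per step and has disappeared by step $n-2$. Two further steps, $P_s1_u2_u2_o\mapsto P_s1_o2_u2_o\mapsto\delta_0$, are then needed. The full loop of $\delta_0$ reappears only in $g(P_s1_o2_u2_o)$, whose final $3_u3_o$ is $g(2_u2_o)$.

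\textbf{What remains to be done.} You must perform these $n$ image computations explicitly. Each needs to be taken only up to the first blocking pair, which suffices by Corollary~\ref{block cancellation}. For $n=2$ you must check directly that $\delta_0=P_s1_u2_u3_u3_o$, that $g(\delta_0)=P_s1_o2_u2_o1_oP_oP_u1_u2_u3_u4_u4_o$, and that $g^2(\delta_0)$ begins with $\delta_0$.
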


\begin{proof}
      We proceed by induction on $k$.  For the base case, $\delta_0$ is the initial segment of $\alpha_0$. We will show that the initial segment of $\alpha_n$ is also $\delta_0$.  To do so, we compute the beginnings of each $\alpha_i$ for $0\leq i\leq n-1$ directly by taking the image of the code for the initial segment of $\alpha_{i-1}$.  The lengths of the images of the initial segments become quite long, and so rather than doing the full computation, we do only enough to include a pair that blocks cancellation. This is sufficient to ensure the portion we do compute will appear in a reduced code for $\alpha_i$.  The  beginnings of the arcs $\alpha_i$ for $n-1\geq i\geq 0$ are  shown in Figure \ref{fig:ini} when $n\geq 3$.
     \begin{figure}[H]
         \centering
         \begin{overpic}[width=1\linewidth]{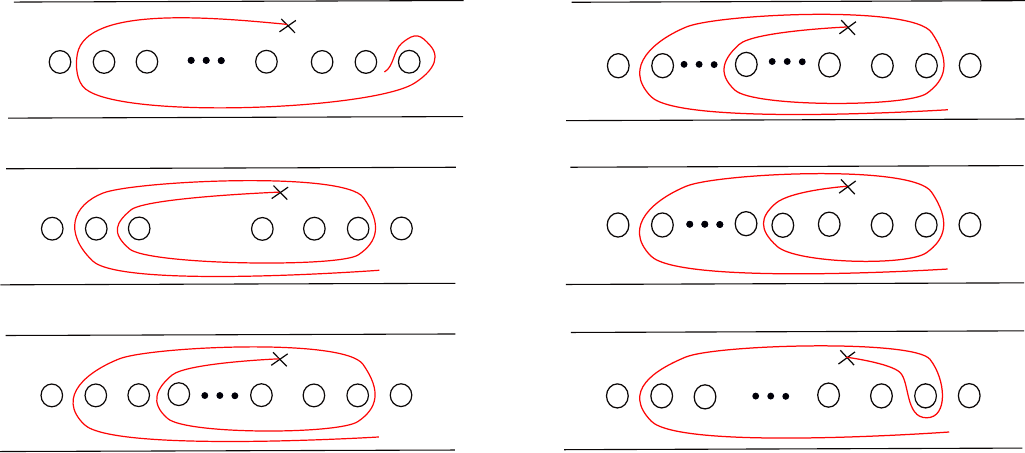}
         \put(0,34){{\color{blue}$i=0$}}
         \put(0,17){{\color{blue}$i=1$}}
         \put(0,1.5){{\color{blue}$i=2$}}
         \put(56,34){{\color{blue}$i$}}
         \put(52,17){{\color{blue}$i=n-4$}}
         \put(52,1.5){{\color{blue}$i=n-1$}}
         \end{overpic}
         \caption{Beginnings of arcs $\alpha_i$ for $0\leq i\leq n-1$ shown on the front of $\Sigma$.}
         \label{fig:ini}
     \end{figure}

    The initial segment of $\alpha_0$ is $\delta_0$, which ends in a pair that blocks cancellation.  Thus a reduced code for $g(\delta_0)$ appears in a reduced code for $\alpha_{i+1}$.  When $n\geq 3$, direct computation yields $g(\delta_0)$ begins with $P_s0_o(-1)_o\cdots (-n+5)_o(-n+4)_o(-n+4)_u(-n+5)_u\cdots 1_u2_u2_o$.  This ends with $2_u2_o$ and contains exactly one instance of $1_{o/u}2_{o/u}$, and so this is the initial segment for $\alpha_1$.  
    
    Continuing in this manner, we see that for $1\leq i\leq n-3$, the initial segment of $\alpha_i$ has the form $P_s0_o(-1)_o\cdots (-n+4+i)_o(-n+3+i)_o(-n+3+i)_u(-n+4+i)_u\cdots 1_u2_u2_o$. Note that these all end in  $2_u2_o$ and contain exactly one instance of $1_{o/u}2_{o/u}$, so they are the initial segments of $\alpha_i$.

    For $i=n-2$, we compute the image of the initial segment of $\alpha_{n-3}$, which is $P_s0_o0_u 1_u2_u2_o$, to obtain that $g(P_s0_o0_u 1_u2_u2_o)$ begins with $P_s1_u2_u2_o$, the initial segment of $\alpha_{n-2}$.  For $i=n-1$, we compute that $g(P_s1_u2_u2_o)$ begins with $P_s1_o2_u2_o$, which is the initial segment of $\alpha_{n-1}$.  Finally, the image $g(P_s1_o2_u2_o)$ begins with $\delta_0=P_s0_o(-1)_o\cdots (-n+4)_o(-n+3)_o(-n+3)_u(-n+4)_u\cdots 1_u2_u3_u3_o$, which is the initial segment of $\alpha_n$.

    When $n=2$, as above, a reduced code for $g(\delta_0)$ appears in a reduced code for $\alpha_1$ and a reduced code for $g^2(\delta_0)$ appears in a reduced code for $\alpha_2$.  We compute these directly:  $\delta_0=P_s1_u2_u3_u3_o$, $g(\delta_0)=P_s1_o2_u2_o1_oP_oP_u1_u2_u3_u4_u4_o$, and $g^2(\delta_0)=P_s1_u2_u3_u3_o2_u1_u1_o2_u2_o1_oP_oP_u1_u2_u3_u4_u5_u5_o$. We see that $g^2(\delta_0)$, and hence $\alpha_2$, begins with $\delta_0$.

    Thus, for $n\geq 2$, $g^n(\delta_0)$ begins with $\delta_0$.  Since the final pair $3_u3_o$ of $\delta_0$ blocks cancellation, this initial copy of $\delta_0$ persists in the reduced code under further applications of $g$.  It follows inductively that  $\alpha_{kn}$ begins with $\delta_0$ for every $k\geq 0$. 
\end{proof}

    Since $\alpha_t=g^t(\alpha_0)$, $\alpha_{kn+t}=g^t(\alpha_{kn})$, and the last two characters of  $\delta_0$  block cancellation, we immediately obtain the following corollary.

\begin{cor}\label{ini_t}
    For all $0\leq t<n$ and $k\geq 0$, the arcs $\alpha_{kn+t}$ and $\alpha_{t}$ share the segment $g^t(\delta_0)$. 
\end{cor}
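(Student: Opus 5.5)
The plan is to combine Lemma~\ref{ini_con} with the blocking property of Corollary~\ref{block cancellation}, applied $t$ times. By Lemma~\ref{ini_con}, for every $k\geq 0$ the reduced code of $\alpha_{kn}$ begins with $\delta_0$. Since $\delta_0$ ends in the pair $3_u3_o$, the code of $\alpha_{kn}$ splits as a concatenation $\delta_0\cdot\rho_k$, where $\rho_k$ is the remainder of the code. When $k=0$ the remainder is $\rho_0$, the rest of $\alpha_0$.

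First I will show by induction on $s$, for $0\leq s\leq t$, the following claim: the reduced code of $g^s(\delta_0\cdot\rho_k)$ begins with the reduced code of $g^s(\delta_0)$, and $g^s(\delta_0)$ still ends in a pair $m_{o/u}m_{u/o}$ with $m\geq 2$.

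1. The base case $s=0$ is immediate from Lemma~\ref{ini_con}.
2. For the inductive step, suppose $g^s(\delta_0)$ ends in a blocking pair $m_{o/u}m_{u/o}$ with $m\geq 2$. Corollary~\ref{block cancellation} gives two things. The image of this pair is $(m+1)_{o/u}(m+1)_{u/o}$, so the new final pair is again blocking, with $m+1\geq 3$. Also, no cancellation occurs between the image of the portion before the pair and the image of the portion after it.
3. It follows that the reduced code of $g^{s+1}(\delta_0\cdot\rho_k)$ is the reduced code of $g(g^s(\delta_0))$, ending in its blocking pair, followed by whatever the reduced image of the tail is.

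Concretely, the final pair of $g^s(\delta_0)$ is $(3+s)_u(3+s)_o$.

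Applying the claim with $s=t$ shows that $\alpha_{kn+t}=g^t(\alpha_{kn})$ begins with $g^t(\delta_0)$ for every $k\geq 0$. Taking $k=0$ shows the same for $\alpha_t$. So both arcs share the initial segment $g^t(\delta_0)$, as a reduced subword starting at $P_s$.

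The main obstacle is making precise that ``no cancellation across the blocking pair'' means the reduced code of $g(\cdot)$ restricted to the part up to and including the pair is independent of the tail. In particular, the reduced form of $g(\text{prefix})$ must not itself lose its final characters when the tail's image is appended. I would handle this by noting that Lemma~\ref{block_AMP} applies to each character of the pair separately. The prefix's image therefore reduces on its own, and the pair's image is exactly $(m+1)_{o/u}(m+1)_{u/o}$, which cannot cancel with the tail's image. This is the same reasoning already used at the end of the proof of Lemma~\ref{ini_con}.
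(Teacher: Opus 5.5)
Your proposal is correct and follows the paper's argument. The paper's one-line proof also combines Lemma~\ref{ini_con}, the identity $\alpha_{kn+t}=g^t(\alpha_{kn})$, and the fact that the terminal pair of $\delta_0$ blocks cancellation; your induction on $s$ just makes explicit that this blocking pair persists as $(3+s)_u(3+s)_o$ under each application of $g$.
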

    The next lemma shows that $\alpha_{nk+t}$ and $g^n(\alpha_{nk})=\alpha_{n{(k+1)}+t}$ share a long initial subpath for all $t$. 
\begin{lemma}\label{common}
    For all $k\geq 0$ and $0\leq t< n$, the arcs $\alpha_{nk+t}$ and $\alpha_{n{(k+1)}+t}$ agree for the first $\frac{1}{2}\lfloor \ell(\alpha_{nk+t})\rfloor$ characters.
\end{lemma}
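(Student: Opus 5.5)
We prove the statement by induction on $k$, using the palindromic structure of the arcs $\alpha_i$ together with the blocking pairs provided by Corollary~\ref{block cancellation}. Three structural facts about every $\alpha_i$ will be needed. First, each $\alpha_i$ is symmetric about a central pair $m_{u}m_{o}$ (or $m_o m_u$) with $m\geq 2$; this holds for $\alpha_0$ by inspection. Second, $g$ carries this symmetric structure to $\alpha_{i+1}$: $g$ commutes with the reflection that reverses orientation of the arc, and the central blocking pair goes to a central blocking pair $(m+1)_{o/u}(m+1)_{u/o}$ by Corollary~\ref{block cancellation}. Third, the first half of the reduced code of $\alpha_{i+1}$ is therefore exactly the reduced code of $g$ applied to the first half of $\alpha_i$, truncated at the new central pair. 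In particular, cancellation never crosses the center. This reduces the lemma to a statement about first halves: if $H_i$ denotes the first $\lfloor \tfrac12\ell(\alpha_i)\rfloor$ characters, we must show $H_{nk+t}$ is an initial subword of $\alpha_{n(k+1)+t}$.

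For the base step $k=0$, let $D_t$ denote the reduced code of $g^t(\delta_0)$, cut at the first blocking pair. By Lemma~\ref{ini_con}, $\alpha_n$ begins with $\delta_0$. Since the final pair $3_u3_o$ of $\delta_0$ blocks cancellation, $\alpha_n$ begins with $\delta_0$ followed by the image under $g^n$ of the remainder of $\alpha_0$ after $\delta_0$. But $\alpha_0$ is precisely $\delta_0$ followed by the mirror of $\delta_0$, since $\ell(\delta_0)=\tfrac12\ell(\alpha_0)+1$. So $H_0$ is $\delta_0$ with its last character dropped, and it is an initial subword of $\alpha_n$. For $0<t<n$, apply $g^t$: by Corollary~\ref{ini_t}, both $\alpha_t$ and $\alpha_{n+t}$ begin with the segment $g^t(\delta_0)$, which ends in a blocking pair. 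The first half of $\alpha_t$ is $g^t$ of the first half of $\alpha_0$, which is exactly this segment up to its central pair. It is therefore contained in $\alpha_{n+t}$.

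For the inductive step, assume $H_{nk+t}$ is an initial subword of $\alpha_{n(k+1)+t}$. Apply $g^n$ to both arcs. The subword $H_{nk+t}$ ends just before the central blocking pair of $\alpha_{nk+t}$. Inside $\alpha_{n(k+1)+t}$, the same subword is followed by a blocking pair: because $\ell$ grows with $k$, the common prefix extends at least one pair past that point. Hence by Lemma~\ref{block_AMP}, the reduced image $g^n(H_{nk+t})$, up to its last blocking pair, is a common prefix of $\alpha_{n(k+1)+t}$ and $\alpha_{n(k+2)+t}$. This image is exactly $H_{n(k+1)+t}$ by the third structural fact above, which completes the induction.

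The main obstacle is the claim that cancellation in $g(\alpha_i)$ never propagates past the central pair, so that "first half maps to first half" with the exact truncation needed for the $\lfloor\cdot\rfloor$ count. I would verify it by checking that the central pair always has index $\geq 2$. This holds because the center of $\alpha_0$ is $3_u3_o$ and $h$ only increases indices there. I would also track the parity and length bookkeeping, namely $\ell(\delta_0)=\tfrac12\ell(\alpha_0)+1$ and its analogue for every $\alpha_i$, to confirm the counts match.
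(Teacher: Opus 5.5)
Your overall plan (induction on $k$, pushing common prefixes forward with Corollary~\ref{block cancellation}, and using the symmetry of each $\alpha_i$ about its central pair for the length count) matches the paper's. However, the inductive step as written has a gap.

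Corollary~\ref{block cancellation} lets you transport a common prefix of two arcs under $g$ only when that prefix ends in a complete blocking pair. Your hypothesis, that $H_{nk+t}$ is an initial subword of $\alpha_{n(k+1)+t}$, does not supply this. Since $\ell(\alpha_i)$ is twice the length of the part before the central pair plus $2$, $H_{nk+t}$ ends with only the \emph{first} character of the central pair. You observe exactly this for $H_0$, which is $\delta_0$ minus its final $3_o$; your later claim that $H_{nk+t}$ ``ends just before the central blocking pair'' contradicts it.

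The sentence meant to bridge this, ``because $\ell$ grows with $k$, the common prefix extends at least one pair past that point,'' is a non sequitur. Growth of $\ell(\alpha_{n(k+1)+t})$ says nothing about which characters follow $H_{nk+t}$ inside $\alpha_{n(k+1)+t}$, and that is precisely what must be shown. Without it, truncating $g^n(H_{nk+t})$ ``up to its last blocking pair'' can discard everything after the last blocking pair lying strictly inside the first half, so you do not recover $H_{n(k+1)+t}$.

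Relatedly, your base-step remark that $\alpha_n$ is $\delta_0$ followed by $g^n$ of the rest of $\alpha_0$ is wrong. In fact $\alpha_n=g^n(\delta_0)\,g^n(\text{rest})=\delta_0\delta_1\,g^n(\text{rest})$, and the extra piece $\delta_1$ is exactly what the induction has to track.

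The fix is to carry the whole first half together with its full central pair, as the paper does:
\begin{itemize}
\item By Corollary~\ref{ini_t}, $\alpha_{n+t}$ begins with $g^t(\delta_0)$, which ends in the blocking pair $(3+t)_u(3+t)_o$.
\item Applying Corollary~\ref{block cancellation} at each of the $nk$ further applications of $g$ shows that $\alpha_{n(k+1)+t}=g^{nk}(\alpha_{n+t})$ begins with $g^{nk+t}(\delta_0)$.
\item Since $\alpha_{nk+t}=g^{nk+t}(\alpha_0)$ is symmetric about the image of $3_u3_o$, the segment $g^{nk+t}(\delta_0)$ is its first half plus the full central pair. Its length is therefore $\tfrac12\ell(\alpha_{nk+t})+1$.
\end{itemize}
This segment is the paper's $\delta_0\delta_1\cdots\delta_k=g^{nk}(\delta_0)$, and its primed analogue when $t\neq 0$. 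With the hypothesis strengthened this way the induction is immediate. The point you flagged as the main obstacle (first half maps to first half) is then just Corollary~\ref{block cancellation} applied at the central pair.
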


\begin{proof}
    We begin with the case $t=0$.  The arcs $\alpha_n$ and $\alpha_0$ share the initial segment $\delta_0$ by Lemma~\ref{ini_con}.  In particular, in the proof of Lemma~\ref{ini_con}, we showed that $g^n(\delta_0)$ begins with $\delta_0$.
    \begin{figure}
        \centering
        \includegraphics[width=1\linewidth]{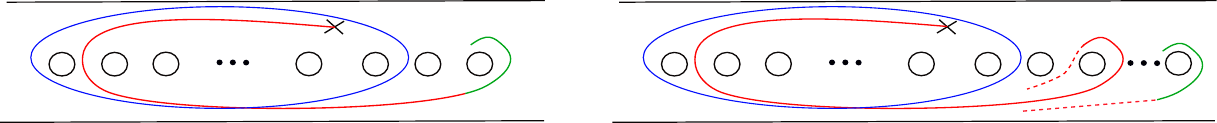}
        \caption{On the left is $\delta_0$, and on the right is an initial and terminal portion of its image under $g^n$.  The image of the green pair on the left is the green pair on the right.  Notice that the initial subsegment of $g^n(\delta_0)$ is $\delta_0$.}
        \label{fig:Ci&n}
    \end{figure}
    Let $\delta_1$ be the segment of $g^n(\delta_0)$ obtained by removing the initial $\delta_0$, so that $g^n(\delta_0)=\delta_0\delta_1$ and $\alpha_n=\delta_0\delta_1\alpha_n'$ for some $\alpha_n'$.  Since $\delta_0$ ends with $3_u3_o$, it follows that $g^n(\delta_0)$, and hence $\delta_1$, ends in $(n+3)_u(n+3)_o$.  Hence Corollary~\ref{block cancellation} implies that there is no cancellation between $g^n(\delta_0)$ and $g^n(\delta_1)$ nor between $g^n(\delta_1)$ and $g^n(\alpha_n')$. Therefore, $g^n(\delta_0\delta_1)=g^n(\delta_0)g^n(\delta_1)=\delta_0\delta_1g^n(\delta_1)$, and so $\alpha_{2n}$ and $\alpha_n$ share the segment $\delta_0\delta_1$. As noted above,  $\ell(\delta_0) = \frac{1}{2}\lfloor \ell(\alpha_0)\rfloor + 1$, because $\alpha_0$ is symmetric about its middle pair $3_u3_o$. Similarly, $\alpha_n$ is symmetric about the final pair $(n+3)_u(n+3)_o$ of $\delta_1$, and so $\ell(\delta_0\delta_1) = \frac{1}{2}\lfloor \ell(\alpha_n)\rfloor + 1$. 
    
    We proceed by induction using the same idea. For $k\geq 2$, let $\delta_k := g^n(\delta_{k-1})$, and assume that $\alpha_{kn}$ and $\alpha_{(k-1)n}$ begin with the segment $\delta_0\delta_1\cdots\delta_{k-1}$, where $\delta_i$ ends with a $k_{o/u}k_{u/o}$ pair  for $k-1\geq i\geq 0$. By Corollary \ref{block cancellation}, there is no cancellation between $g^n(\delta_i)$ and $g^n(\delta_{i+1})$ for $k-2\geq i\geq 0$ nor between $g^n(\delta_{k-1})$ and the rest of $\alpha_{(k+1)n}$. Hence, the code for $\alpha_{(k+1)n}$ begins with the segment 
    \[g^n(\delta_0\delta_1\cdots\delta_k)=g^n(\delta_0)g^n(\delta_1)\cdots g^n(\delta_k) = \delta_0\delta_1\cdots\delta_{k}\delta_{k+1}.\]
    Therefore, $\alpha_{(k+1)n}$ and $\alpha_{kn}$ begin with the segment $\delta_0\delta_1\cdots\delta_{k}$,  where $\delta_i$ ends with a $k_{o/u}k_{u/o}$ pair  for $k\geq i\geq 0$. By the symmetry of $\alpha_{nk}$ about the pair $(nk+3)_u(nk+3)_o$ at the end of $\delta_k$, we have 
    \[\ell(\delta_0\delta_1\cdots\delta_k) = \frac{1}{2}\lfloor \ell(\alpha_{nk})\rfloor + 1.\]
    This completes the induction when $t=0$.

    For $t\neq 0$, the arcs $\alpha_t$ and $\alpha_{kn+t}$ share the segment $g^t(\delta_0)=\delta_0'$ by Corollary \ref{ini_t}. Moreover, the proof of Lemma~\ref{ini_con} shows that $g^n(\delta_0')$ begins with $\delta_0'$.  Let $\delta_1'$ be such that $g^n(\delta_0')=\delta_0'\delta_1'$ and for $k\geq 2$, define $\delta_k':=g^n(\delta_{k-1}')$. An identical argument as above with $\delta_k'$ playing the role of $\delta_k$ shows that the arcs $\alpha_{kn+t}$ and $\alpha_{(k+1)n+t}$ agree on the first $\frac{1}{2}\lfloor\ell(\alpha_{nk+t})\rfloor$ characters.   

    In particular, for every $0\leq t< n$, the above construction gives increasingly long initial subsegments of $\alpha_{kn+t}$, and hence $\ell(\alpha_{kn+t})\to\infty$ as $k\to\infty$.
\end{proof}

    We are now ready to show that the arcs $\alpha_{kn+t}$ converge to geodesic lamination as $k\to\infty$.  In order to apply Saric's criterion for convergence (see Section~\ref{sec:laminations}), we first fix the pants decomposition and the train track $\Theta$ as in \cite[Sections 10.2 \& 10.3]{Abbott_2025}; see Figure \ref{fig:pantsdecomp} for the pants decomposition and Figure \ref{fig:cuffs} for the train track.

    \begin{figure}
    \centering
    \begin{overpic}[width=4in, trim={1.7in 7.55in 1in 1.65in},clip]{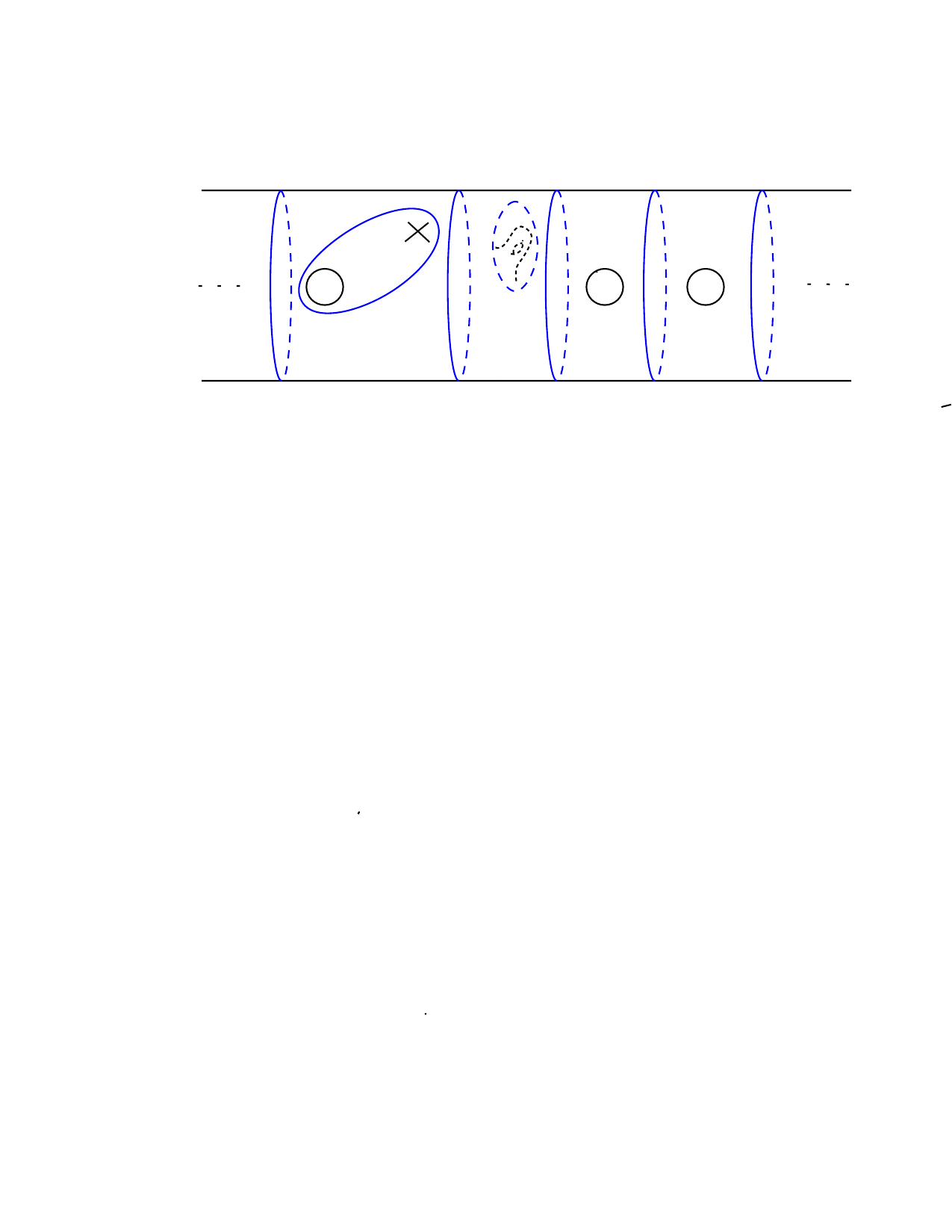}
    \put(22,26){\textcolor{blue}{$\gamma_p$}}
    \put(43,27.5){\textcolor{blue}{$\delta_{0}$}}
    \put(12,-2){$\gamma_{0,\ell}$}
    \put(40,-2){$\gamma_{0,r}$}
    \put(55,-2){$\gamma_{1,\ell}$}
    \put(70,-2){\footnotesize$\gamma_{1,r}=\gamma_{2,\ell}$}
    \put(87,-2){$\gamma_{2,r}$}
    \put(24,15){$B_{0}$}
    \put(65,18.5){$B_{1}$}
    \put(82,12){$B_{2}$}
    \put(27,20){\textcolor{red}{$Q_p$}}
    \put(22,6){\textcolor{red}{$Q_{0}$}}
    \put(45,6){\textcolor{red}{$Q_{0,r}$}}
    \put(62,6){\textcolor{red}{$Q_{1}$}}
    \put(79,6){\textcolor{red}{$Q_{2}$}}
    \put(34,21){$p$}
    \end{overpic}
    \caption{The pants decomposition from \cite{Abbott_2025}.}
    \label{fig:pantsdecomp}
    \end{figure}

    \begin{figure} 
    \centering
    \begin{overpic}[width=4in, trim={1.7in 7.55in 1in 1.65in},clip]{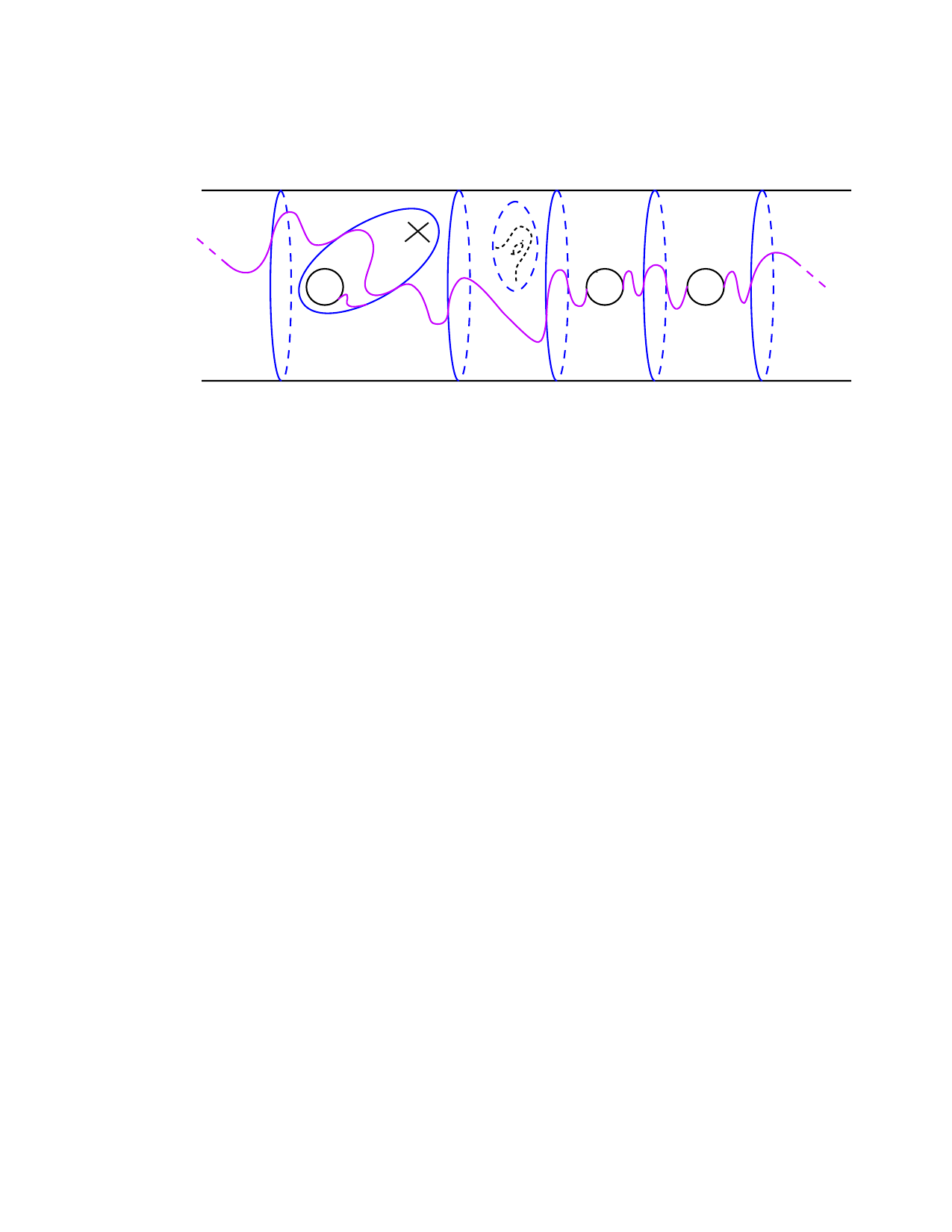}
    \put(22,25){$\scriptstyle\gamma_p$}
    \put(12,-1){$\scriptstyle\gamma_{0,\ell}$}
    \put(40,-1){$\scriptstyle\gamma_{0,r}$}
    \put(55,-1){$\scriptstyle\gamma_{1,\ell}$}
    \put(70,-1){$\scriptstyle\gamma_{1,r}=\gamma_{2,\ell}$}
    \put(87,-1){$\scriptstyle\gamma_{2,r}$}
    \put(29,28){$\scriptstyle P_o$}
    \put(28.75,20){$\scriptstyle P_u$}
    \put(22.5,20){$\scriptstyle0_o$}
    \put(18,9){$\scriptstyle0_u$}
    \put(14,21){$\scriptstyle0_L$}
    \put(31,11){$\scriptstyle0_R$}
    \put(23,17){$\scriptstyle B_{0}$}
    \put(66,12){$\scriptstyle B_{1}$}
    \put(82,12){$\scriptstyle B_{2}$}
    \put(43,8){$\scriptstyle0_{RR}$}
    \put(58.5,10.5){$\scriptstyle1_{L}$}
    \put(74,10){$\scriptstyle2_{L}$}
    \put(66,18.5){$\scriptstyle1_{R}$}
    \put(82,18.5){$\scriptstyle2_{R}$}
    \put(62.5,19){$\scriptstyle1_{o}$}
    \put(62.5,10.5){$\scriptstyle1_{u}$}
    \put(78.5,19){$\scriptstyle2_{o}$}
    \put(78.5,10.5){$\scriptstyle2_{u}$}
    \put(34.3,21.9){$\scriptstyle p$}
    \end{overpic}
    \caption{The train track $\Theta$ from \cite{Abbott_2025}.}
    \label{fig:cuffs}
    \end{figure}
    
    Since our arcs $\alpha_i$ start and end at a puncture on $\Sigma$, we cannot directly apply Proposition~\ref{prop:convtogeod}. Instead, for each $\alpha_i$, we form a corresponding closed curve $C_i$ such that $g(C_i)=C_{i+1}$ following the method in \cite[Section 10.3]{Abbott_2025}. Fix a small disk $D_p$ centered at $p$ that is fixed under $g$ and such that $D_p\cap \alpha_i$ is exactly two segments with endpoints $z_{i,1}$ and $z_{i,2}$ on $\partial D_p$. By a homotopy, we can assume that $z_{i,1}=z_{j,1}$ and $z_{i,2}=z_{j,2}$ for all $i\neq j$. For each $\alpha_i$ we obtain $C_i$ by replacing $\alpha_i\cap D_p$ with the clockwise arc from $z_{i,1}$ to $z_{i,2}$ on $\partial D_p$; see Figure \ref{fig: closed curves}. Note that this choice guarantees that $p$ is in the interior of $C_i$. 

   \begin{figure}[H]
       \centering
       \includegraphics[width=0.42\linewidth]{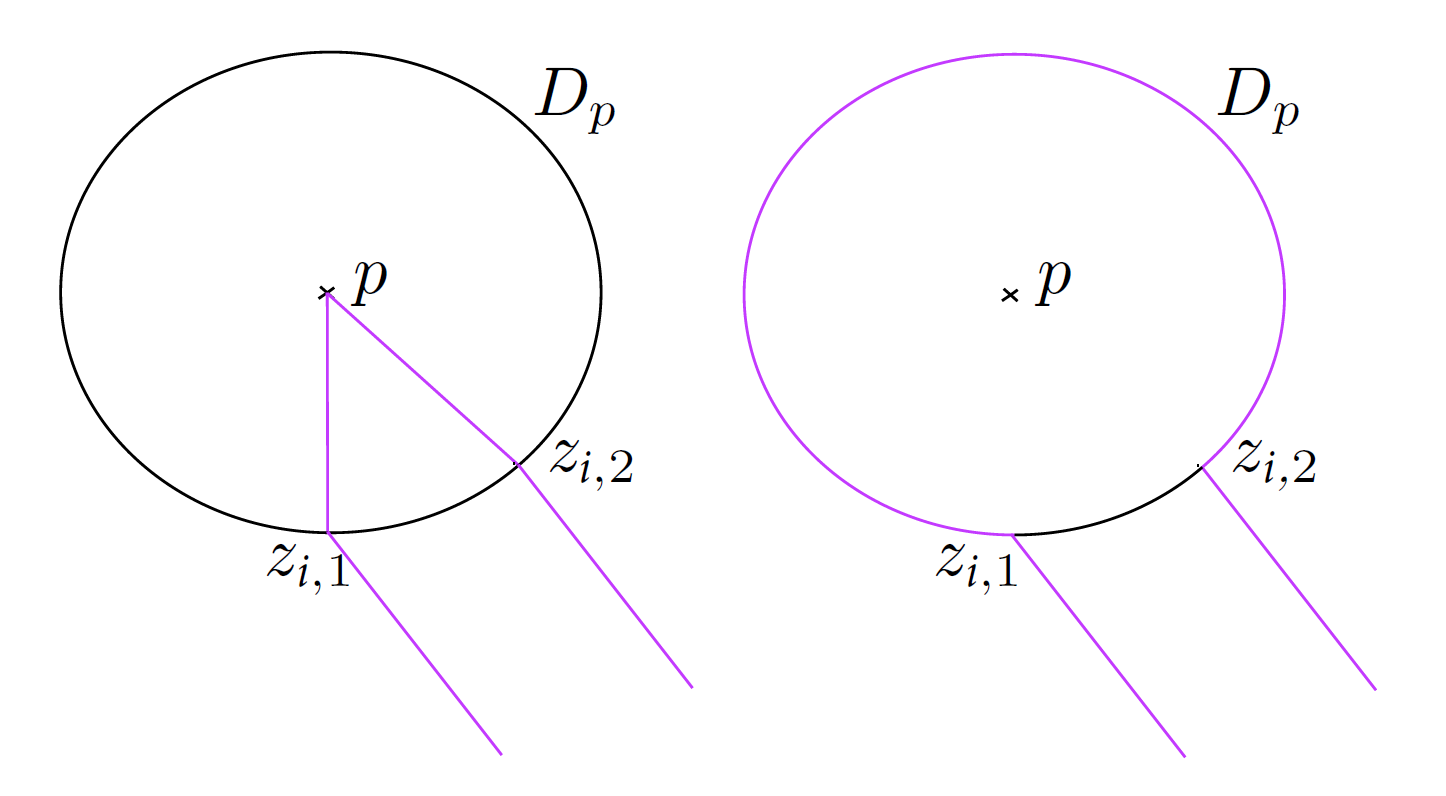}
       \caption{Modifying the arcs $\alpha_i$ (left) to obtain curves $C_i$ (right).}
       \label{fig: closed curves}
   \end{figure}

    We assign a \emph{$\Theta$-code} to $C_i$ as in \cite[Section 10.3]{Abbott_2025}. This is a code particularly suited to the train track $\Theta$. Roughly, each character in the $\Theta$-code for a curve $\alpha$ corresponds to a seam or a cuff in $\Theta$ traversed by $\alpha$. The seams and cuffs of $\Theta$ are labeled with their corresponding character in the $\Theta$-code in Figure~\ref{fig:cuffs}.  It follows from \cite[Lemma~4.4]{Abbott_2025} that the arcs $\alpha_i$ are symmetric, from which it follows that the $\Theta$--code for the curves $C_i$ is symmetric. The following, along with \cite[Theorem~1.2]{Abbott_2025} proves Theorem~\ref{main}.

    \begin{theorem}\label{thm:convergence}
        Let $n\geq 2$.  For each $0\leq t < n$, the closed curves $C_{nk+t}$ converge to a geodesic lamination in the coarse Chabauty topology as $k\rightarrow\infty$. 
    \end{theorem}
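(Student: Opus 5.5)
We fix $t\in\{0,\dots,n-1\}$ and work in the universal cover with the lift $\tilde\Theta$ of the train track $\Theta$ from Figure~\ref{fig:cuffs}. The plan is to show that the closed geodesics representing $C_{nk+t}$ have $\Theta$-codes that stabilize on longer and longer central windows. We then use Proposition~\ref{prop:convtogeod} to obtain a limiting bi-infinite geodesic, and take its closure in $\Sigma$ as the lamination $L_t$.

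\textbf{Step 1: transfer Lemma~\ref{common} to $\Theta$-codes.} Each $C_i$ is obtained from $\alpha_i$ by replacing $\alpha_i\cap D_p$ with a fixed clockwise arc around $p$. So the $\Theta$-code of $C_i$ is obtained from the code of $\alpha_i$ by a local translation: each character $j_{o/u}$ is replaced by the corresponding cuff/seam word, as in \cite[Section~10.3]{Abbott_2025}, and the $P_s$ ends are replaced by a fixed word near $\gamma_p$. We check that this translation is determined by bounded neighbouring context. Then Lemma~\ref{common} implies that the $\Theta$-codes of $C_{nk+t}$ and $C_{n(k+1)+t}$ agree on an initial block of length $N_k\to\infty$. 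By the symmetry of the $\Theta$-code noted before the theorem, they also agree on the corresponding terminal block. Both blocks contain the segment $\delta'_0\delta'_1\cdots\delta'_k$ (with $\delta'_i=g^t(\delta_i)$, or $\delta_i$ when $t=0$).

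\textbf{Step 2: choose lifts and pass to a limit.} The cyclic $\Theta$-code of $C_{nk+t}$ determines a periodic bi-infinite edge path $\tilde a_k$ in $\tilde\Theta$. Its geodesic $G(\tilde a_k)$ is the lift of the geodesic representative of $C_{nk+t}$; this lift is weakly carried because $C_i$ avoids the cusp at $p$, so \v{S}ari\'c's hypotheses hold. We anchor each lift at the edge of $\tilde\Theta$ corresponding to the pair $(nk+3)_u(nk+3)_o$, or the analogous pair for $g^t$. This is the symmetry point about which $\alpha_{nk+t}$ reads as the growing word $\delta'_0\cdots\delta'_k$ in reverse on one side and forward on the other. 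With this normalization, Step~1 shows that for every finite subpath of $\tilde a_k$ centred at the anchor, all $\tilde a_{k'}$ with $k'\ge k$ contain it. Define $\tilde a$ to be the bi-infinite path whose central windows are these stabilized words. By \cite[Proposition~4.6]{Saric:2021} it has two distinct endpoints, and by Proposition~\ref{prop:convtogeod} we get $G(\tilde a_k)\to \gamma^t:=G(\tilde a)$.

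\textbf{Step 3: build the lamination and check convergence.} The images $G(\tilde a_k)$ are simple closed geodesics, so distinct translates of $\tilde a_k$ never cross. Limits of noncrossing paths do not cross, so the $\pi_1$-translates of $\tilde a$ are pairwise noncrossing. We let $\Gamma$ be the closure of this family under condition (ii) of Proposition~\ref{prop:biinfedgepaths}. That proposition then produces a geodesic lamination $L_t=\overline{\gamma^t}$. For coarse Chabauty convergence, every leaf $\ell$ of $L_t$ comes from a path in $\Gamma$ whose finite subpaths all appear in translates of $\tilde a$, and hence eventually in translates of $\tilde a_k$. Choosing these translates gives leaves $\ell_k$ of $C_{nk+t}$ with $\ell_k\to\ell$, again by Proposition~\ref{prop:convtogeod}.

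\textbf{Main obstacle.} The delicate step is Step~2. First, agreement of the initial halves of the codes must translate into agreement on windows centred at a fixed edge. This requires the anchor edge to sit at a fixed position in $\tilde\Theta$ independent of $k$, so we may instead anchor at the start of $\delta'_0$ near $p$ and read outward along both halves of the symmetric code. Second, the limit must not run into the cusp at $p$, since \v{S}ari\'c's setting excludes this. I would first try to show that the characters near $p$ occur only boundedly often along the limit path. Failing that, I would handle the ends separately using the disk $D_p$ as in \cite[Section~10.3]{Abbott_2025}.
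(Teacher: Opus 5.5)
Your overall route is the paper's. You stabilize the $\Theta$-codes of the $C_{nk+t}$ on growing windows via Lemma~\ref{common}. The paper gets blocks of length $\lfloor\frac1{10}\ell(\alpha_{nk+t})\rfloor$ by quoting a length comparison, and your locality check is what that transfer implicitly relies on. You then build a limit path and close up its deck translates; the paper's closure of translates and reindexings in the local topology is the same as your closure under condition (ii). Non-crossing comes from simplicity of the $C_{nk+t}$, and coarse Chabauty convergence comes from a diagonal choice of lifts and Proposition~\ref{prop:convtogeod}.

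\textbf{The normalization in Step~2 fails as written.} The pair $(nk+3)_u(nk+3)_o$, or its analogue for $t\neq0$, is the turn of the arc around the subsurface bounded by $B_{nk+3}$. This causes two problems:
\begin{enumerate}
\item It lies on different edges of $\Theta$ for different $k$ and leaves every compact set, so the lifts cannot be anchored at a common edge of $\tilde\Theta$.
\item The window around it consists of the tail of $\delta_k'$ and its mirror image. That is the growing end of the word, and it changes with $k$.
\end{enumerate}
So nothing stabilizes there, and the sentence ``with this normalization, Step~1 shows\dots'' is false. The description you attach to this anchor, the stable word $\delta_0'\delta_1'\cdots$ read outward in both directions, is true only at the \emph{other} symmetry point of the palindromic cyclic code: the common arc $\nu\subset\partial D_p$.

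The fallback in your last paragraph is the correct fix, and it is exactly the paper's normalization:
\begin{enumerate}
\item fix a lift $\tilde\nu$ and take the lifts $\widetilde{C_{nk+t}}$ through $\tilde\nu$;
\item index the periodic $\Theta$-code so that $\tilde\nu$ sits between positions $0$ and $1$;
\item use the symmetry $b^k_{-j+1}=b^k_j$ to turn agreement of initial blocks into agreement on the windows $[-N_k+1,N_k]$.
\end{enumerate}
With that anchor, your Steps~2 and~3 go through as in the paper. The paper additionally proves $L_t=\overline{\gamma^t}$ via Proposition~\ref{prop:convtogeod} rather than asserting it.

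On the cusp, the paper does nothing beyond replacing the arcs $\alpha_i$ by the closed curves $C_i$. Bounding the number of visits of the limit path near $p$ is more than you need, and the path does return near $p$: for example, $\delta_1$ contains $P_oP_u$ when $n=2$. What matters is only that the path does not eventually spiral into a cusp.
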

    
    \begin{proof}
        Fix $t$.  All $C_i$ share a subarc $\nu$ of $\partial D_p$.  Fix a lift $\tilde \nu$ of $\nu$, and then fix lifts $\widetilde{C_i}$ of $C_i$ containing $\tilde \nu$ for each $i$. We will check that $\widetilde{C_{kn+t}}$ converges to some $\widetilde{\alpha}^t$. Using the $\Theta$-code, we write $C_{nk+t}=a_{1,t}^ka_{2,t}^k\cdots a_{\ell^k_t,t}^k$, where each $a_{i,t}^k$ is the $i$th character in the $\Theta$--code and $\ell^k_t$ is the length of the $\Theta$-code. By the symmetry of $C_{nk+t}$, this code satisfies $a_{j,t}^k=a_{\ell^k_t-j+1,t}^k$ for all $1\leq j\leq \ell_t^k$. The $\Theta$-code for $\widetilde{C_{kn+t}}$ is then the bi-infinite periodic extension $(\cdots ,b_{-1,t}^k,b_{0,t}^k,b_{1,t}^k,b_{2,t}^k,\cdots)$ of this code. After choosing indices so one period occupies the indices $1,\dots, \ell^k_t$, define $b^k_{i,t}=a^k_{i,t}$ for $1\leq i\leq \ell_t^k$ and $b_{i+\ell_k}^k=b_i^k$ for all $i\in \mathbb Z$.  The symmetry of the finite code implies that $b_{-j+1,t}^k=b_j^k$ for all $j\in \mathbb Z$.

    By \cite[Lemma 10.6]{Abbott_2025}, $\ell(\alpha_{nk+t})\leq \ell^k_t \leq 5\ell(\alpha_{nk+t})$. By Lemma \ref{common}, $\alpha_{nk+t}$ and $\alpha_{n(k+1)+t}$ share the first $\frac{1}{2}\lfloor \ell(\alpha_{nk+t})\rfloor$ characters. This means that $C_{nk+t}$ and $C_{n(k+1)+t}$ share at least $\lfloor \frac{1}{10}\ell(\alpha_{nk+t})\rfloor$ initial characters in their $\Theta$-code. We consider the following bi-infinite path
    \begin{equation}\label{eqn:defoftildealphat}
    \widetilde{\alpha}^t = (\cdots c_{-1,t},c_{0,t},c_{1,t},c_{2,t},\cdots),
    \end{equation}
    where for each $k\geq 0$ and $1\leq j\leq \lfloor \frac{1}{10}\ell(\alpha_{nk+t})\rfloor$, we define $c_{j,t}:=b_{j,t}^k$ and $c_{-j+1,t}:=b^k_{-j+1,t}=b_{j,t}^k$. This definition is consistent, since for $1\leq j\leq \lfloor \frac{1}{10}\ell(\alpha_{nk+t})\rfloor$, we have $b_{j,t}^k=b_{j,t}^{k+1}$ for all $k\geq 0$. By construction, $\widetilde{\alpha}^t$ agrees with $C_{nk+t}$ on the characters $c_{j,t}$ with $0< j\leq \lfloor\frac{1}{10}\ell(\alpha_{nk+t})\rfloor$ and $-\lfloor\frac{1}{10}\ell(\alpha_{nk+t})\rfloor+1\leq j\leq 0$ for each $k\geq 0$.

    Equip the space of indexed bi-infinite edge paths in $\tilde \Theta$  with the local topology, in which $\eta_i\to\eta$ if, for every $R\in \mathbb N$, the restrictions of $\eta_i$ and $\eta$ to $[-R,R]$ agree for all sufficiently large $i$.  Let $\mathcal O_t$ consist of the images of $\tilde \alpha^t$ under all deck transformations, together with all reindexings of these bi-infinite paths. Define
    \[
    \Gamma_t:=\overline{\mathcal O_t}.
    \]
    We will show that Proposition~\ref{prop:biinfedgepaths}(i) and (ii) are satisfied by $\Gamma_t$.  For (i), first suppose that $h_1\tilde \alpha^t$ crosses $h_2\tilde\alpha^t$.  By applying $h_1^{-1}$, we may assume that there exists $h$ such that $\tilde \alpha^t$ crosses $h\tilde\alpha^t$.  Since there are periodic edge paths associated to $\tilde C_{nk+t}$ which converge to $\tilde\alpha^t$ as $k\to\infty$, we also have $h\tilde \alpha_{nk+t}\to h \tilde \alpha^t$.  Thus $\tilde \alpha_{nk+t}$ and $h\tilde \alpha_{nk+t}$ cross for sufficiently large $k$.  But these are two lifts of the same simple closed geodesic $C_{nk+t}$, which is a contradiction.  Reindexing cannot create transverse crossings, and so no two elements of $\mathcal O_t$ cross.  Crossing is witnessed by finite subpaths.  Thus, if two elements in $\Gamma_t=\overline{\mathcal O_t}$ cross, then sufficiently close elements of $\mathcal O_t$ must cross as well.  This contradicts the fact that no two elements of $\mathcal O_t$ cross.

    For (ii), suppose $\tilde\beta$ is a bi-infinite edge path in $\tilde \Theta$ such that every finite edge path is contained in some element of $\Gamma_t$.  For each $i$, there exists an element $\eta_i\in \Gamma_t$ which contains $\tilde \beta_{[-i,i]}$.  Since $\Gamma_t$ is closed under re-indexing, we may reindex the sequence $(\eta_i)$ so that $\eta_i|_{[-i,i]}=\tilde\beta|_{[-i,i]}$.   Thus  $\tilde\beta =\lim_{i\to\infty} \eta_i$.  Since $\Gamma_t$ is closed, $\tilde\beta\in \Gamma_t$.  Thus (ii) holds.  By Proposition~\ref{prop:biinfedgepaths}, there is a lamination $L_t$ on $\Sigma$ associated to $\Gamma_t$.

    In fact, if $\widetilde{\gamma}^t$ is the geodesic weakly carried by $\tilde\Theta$ corresponding to $\tilde\alpha^t$ and $\gamma^t$ is the image of $\tilde\gamma^t$ on $\Sigma$, then the  lamination $L_t$ is simply the closure $\overline{\gamma^t}$.  To see this, since $\tilde\alpha^t\in \mathcal O_t\subseteq \Gamma_t$, the corresponding geodesic is a leaf of $L_t$. Because $L_t$ is closed, $\overline\gamma^t\subseteq L_t$. For the reverse inclusion, let $\lambda$ be a leaf of $L_t$, and choose a lift $\tilde\lambda=G(\eta)$ with $\eta\in \Gamma_t$.  Since $\Gamma_t=\overline{\mathcal O_t}$, there exist $\eta_i\in \mathcal O_t$ with $\eta_i\to\eta$ in the local topology. Each $\eta_i$ is a deck translate of a reindexing of $\tilde\alpha^t$, so $G(\eta_i)$ is a deck translate of $\tilde \gamma^t$.  Hence its projection to $\Sigma$ is exactly $\gamma^t$.  Since $\eta_i\to\eta$, it follows from Proposition~\ref{prop:convtogeod} that $G(\eta_i)\to G(\eta)=\tilde \lambda$.  Since every $G(\eta_i)$ projects to $\gamma^t$, every point of the limit $\lambda$ lies in $\overline{\gamma^t}$.  Thus, $\lambda\subseteq \overline{\gamma^t}$. Since this holds for every leaf $\lambda\in L_t$, we have $L_t\subseteq \overline{\gamma^t}$. Thus $L_t=\overline{\gamma^t}$.

    Let $\widetilde{\gamma_{nk+t}}$ be the geodesic weakly carried by $\Tilde{\Theta}$ corresponding to $\widetilde{C_{nk+t}}$. We now check that $\lim_{k\rightarrow\infty}\widetilde{\gamma_{nk+t}}=\widetilde{\gamma}^t$. For any finite subpath $T\subset \widetilde{\alpha}^t$, we can find $h\geq 1$ such that $T\subset c_{-h,t}c_{-h+1,t}\cdots c_{h,t}$. Then for $K$ such that $ \lfloor \frac{1}{10}\ell(\alpha_{nK+t})\rfloor > h$, we have $T\subset \widetilde{C_{nk+t}}$ for all $k>K$. By Proposition~\ref{prop:convtogeod}, it follows that $\lim_{k\rightarrow\infty}\widetilde{\gamma_{nk+t}}=\widetilde{\gamma}^t$. 
    
    While $\widetilde{\gamma}^t$ is a complete geodesic in $\mathbb H^2$, its image $\gamma^t$ on $\Sigma$ may not be closed.  As shown above, the associated geodesic lamination on $\Sigma$ is $L_t=\overline{\gamma^t}$.  We show that $C_{nk+t}$ coarse Chabauty converges to the geodesic lamination $L_t$.  Let $\lambda$ be a leaf of $L_t$, and choose a lift $\tilde \lambda$.  By the correspondence between $L_t$ and $\Gamma_t$, there is some $\eta\in \Gamma_t$ such that $G(\eta)=\tilde\lambda$.  Since $\Gamma_t=\overline{\mathcal O_t}$, there exists a sequence $\eta_i\in \mathcal O_t$ with $\eta_i\to \eta$.  Each $\eta_i$ is the deck translate of a reindexing of $\tilde\alpha^t$.  Thus, for each fixed $i$, applying the corresponding deck transformation and reindexing to $\tilde C_{nk+t}$ gives a sequence of bi-infinite edge paths converging to $\eta_i$, and hence a sequence of lifts $\tilde C_{nk+t}$ whose geodesic representatives converge to $G(\eta_i)$.  For each $i$, choose $k_i$ sufficiently large so that the corresponding lifts of $C_{nk+t}$ agree with $\eta_i$ on the interval $[-i,i]$.  We can choose the sequence $k_i$ to be strictly increasing.  Since $\eta_i\to\eta$, these chosen lifts have edge paths that converge to $\eta$.  By Proposition~\ref{prop:convtogeod}, their geodesic representatives converge to $G(\eta)=\tilde\lambda$.   Thus $\lambda$ is the geodesic limit of the sequence $C_{nk+t}$. Since $\lambda$ was arbitrary,  $C_{nk+t}$  converges to $L_t$ in the coarse Chabauty topology.    
\end{proof}

\begin{figure}
    \centering
    \includegraphics[width=.75\linewidth]{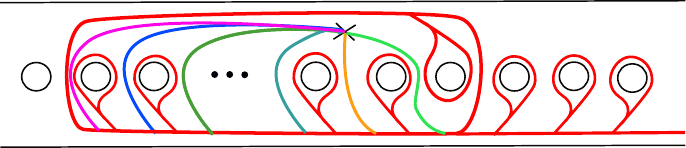}
    \caption{The limiting lamination.  The portion in red is common among all $\beta^t$, while each other color appears in $\beta^t$ for a single $t\in\{0,\dots, n-1\}$.}
    \label{fig:limitinglamination}
\end{figure}

We next prove Theorem~\ref{thm:main2}.  In the proof of Theorem~\ref{main}, we started with a sequence of arcs $g^k(C_0)$ beginning and ending at $p$.  In order to find a limiting lamination, we modified each loop near the puncture $p$ to form a sequence of simple closed curves.  We then showed that this sequence of simple closed curves converges to a lamination.

 Here, elements of $\partial A(\Sigma,p)$ are cliques of rays, which must start at the puncture $p$.  Thus, it is not quite the laminations $\tilde \gamma^t$ that we will show form the attracting limit point of $g$, but rather the limits of the original rays $\lim_{k\to\infty} g^k(C_0)$.  In the proof, we will rely on techniques and results from \cite{Abbott_2022}, which were inspired by \cite{Abbott_2025} and work of Bavard \cite{Bavard}.  In particular, we use the notion of an arc $\beta$ \textit{starting like} an arc $C_i$, which means that the maximal or terminal segment of $C_i$ (not necessarily respectively) has code length at least $\lfloor\frac12\ell_c(C_i)\rfloor-2$.  Roughly, this implies that $\beta$ agrees with $C_i$ for approximately half the length of $C_i$.

 We will use the following lemma about convergence to the boundary in $\mathcal A(\Sigma,p)$.  We say a sequence $(\gamma_i)$ of arcs on $\Sigma$ \textit{cover converge} to $\gamma\subseteq \Sigma$ if there exist lifts $\tilde \gamma_i$ and a lift $\tilde \gamma$ such that $\tilde \gamma_i$ converges to $\tilde \gamma$ in $\mathbb H^2$.

 \begin{lemma}\label{lem:boundarypt}
     Suppose $(x_i)$ is a quasi-geodesic sequence of loops representing a boundary point $P\in \partial \mathcal A(\Sigma,p)$ and a subsequence of $(x_i)$ cover converges to a ray $\lambda$. Then $\lambda$ is high-filling, and $\lambda$ belongs to the clique corresponding to $P$.
 \end{lemma}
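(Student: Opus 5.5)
This lemma is essentially a repackaging of Bavard--Walker's description of $\partial\mathcal A(\Sigma,p)$, and the plan is to reduce it directly to their results rather than reprove anything. Recall how the homeomorphism between $\partial\mathcal A(\Sigma,p)$ and cliques of high-filling rays is constructed in \cite{BavardWalker}. Bavard and Walker work with the \emph{conical cover} (the cover of $\Sigma$ associated to the isolated puncture), or equivalently with lifts to $\mathbb H^2$ based at a fixed lift of $p$. In that setting, a sequence of loops converging to a boundary point $P$ has the following property: any limit, in the Hausdorff/cover sense, of lifts of the loops is a union of rays. The high-filling rays among these limits are precisely the elements of the clique associated to $P$. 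So the proof will consist of translating our notion of cover convergence into theirs and then quoting the appropriate statements.

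\emph{Step 1: set up compatible lifts.} Let $(x_{i_j})$ be the subsequence that cover converges to $\lambda$. Choose lifts $\tilde x_{i_j}$ that converge to $\tilde\lambda$ in $\mathbb H^2$. Since $\lambda$ is a ray starting at $p$, $\tilde\lambda$ starts at a parabolic fixed point $\tilde p$ on $\partial\mathbb H^2$. Convergence then forces the lifts $\tilde x_{i_j}$ to eventually share this endpoint $\tilde p$: a nearby geodesic with an endpoint at a different lift of $p$ would have to leave every horoball neighborhood, which convergence rules out. Passing to the conical cover, we get loops all based at the same lift of $p$, converging to the lift of $\lambda$.

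\emph{Step 2: $\lambda$ is high-filling.} This is the step I expect to be the main obstacle, since it needs the full quasi-geodesic hypothesis. Suppose instead that $\lambda$ were a loop, or $1$-filling, or $2$-filling. In each case there is a loop $y$ at bounded distance from $\lambda$: if $\lambda$ is $1$-filling, take a loop disjoint from $\lambda$; if $\lambda$ is $2$-filling, take a loop disjoint from the $1$-filling ray that is itself disjoint from $\lambda$. By convergence, long initial segments of $x_{i_j}$ fellow-travel $\lambda$. Bavard--Walker's ``unicorn''/surgery arguments, namely Lemmas 5.6.x and the proof of the boundary theorem in \cite{BavardWalker}, then produce a loop within uniformly bounded distance of both $y$ and $x_{i_j}$ for all large $j$. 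This contradicts $d(x_0,x_{i_j})\to\infty$, which holds because $(x_i)$ is a quasi-geodesic sequence converging to a boundary point. Rather than reproving this, I will cite the statement in \cite{BavardWalker} that limits of loops along a sequence going to infinity in $\mathcal A(\Sigma,p)$ contain no loops and no $1$- or $2$-filling rays.

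\emph{Step 3: $\lambda$ lies in the clique of $P$.} Let $\mu$ be any high-filling ray in the clique $Q$ representing $P$. By Bavard--Walker, a sequence of loops converges to $Q$ exactly when its limit rays are disjoint from, or equal to, the rays of $Q$. More precisely, their map sends a clique to the boundary point obtained from unicorn paths between a loop and rays of the clique, and equivalence of rays is disjointness. Since $x_i\to P$, the limit $\lambda$ cannot cross $\mu$ transversally: a crossing is detected on a finite segment, so it would persist in the $x_{i_j}$. By their uniqueness argument, this would give the $x_{i_j}$ a limit point different from $P$. Hence $\lambda$ is disjoint from every ray of $Q$, and by maximality of cliques (cliques are the equivalence classes of high-filling rays under disjointness) we get $\lambda\in Q$. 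The only real work beyond citation is confirming that our lift-based cover convergence matches Bavard--Walker's topology on rays in the conical cover, which is exactly what Step 1 provides.
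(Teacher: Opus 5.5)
\textbf{The gap is in Step 3.} You argue that if $\lambda$ crossed some $\mu\in Q$, the crossing would ``persist in the $x_{i_j}$'' and so give $(x_{i_j})$ a different limit. This inference has no force. Since $\mu$ is high-filling, it meets every loop, so every $x_i$ crosses $\mu$ whether or not $\lambda$ does. No result you cite turns ``$x_{i_j}$ crosses $\mu$ near its start'' into ``$x_{i_j}\not\to P$''. The characterization you invoke (a sequence converges to $Q$ exactly when its limit rays are disjoint from or equal to the rays of $Q$) already contains the conclusion you want.

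What is missing is an identification of the boundary point determined by $\lambda$ itself: the unicorn path $P(a,\lambda)$ from a fixed loop $a$ must represent $P$. This is exactly what the paper extracts from the second paragraph of the proof of Bavard--Walker's Theorem~5.1.1, along with the fact that $\lambda$ is high-filling. Membership of $\lambda$ in the clique of $P$ then follows from their description of the boundary, under which the fibers of $\lambda\mapsto[P(a,\lambda)]$ are precisely the cliques. With that citation in place, your disjointness and maximality argument is unnecessary, and the paper's proof is nothing more than this citation.

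\textbf{Two smaller points.} In Step 2, drop the sketch rather than cite around it. Convergence to $\lambda$ constrains only an initial segment of $x_{i_j}$, so it gives no bound on $i(x_{i_j},y)$ or on $d(x_{i_j},y)$. The Bavard--Walker argument uses that $(x_i)$ is a quasi-geodesic converging to $P$, and the statement you cite should carry that hypothesis rather than ``going to infinity.''

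In Step 1 the conclusion is right but the reason is not. Convergence in $\mathbb H^2$ is convergence of endpoints, which alone does not force an endpoint to equal $\tilde p$; what forces it is simplicity. Normalize $\tilde p=\infty$ with stabilizer generated by $z\mapsto z+1$. A lift of a simple loop not ending at $\infty$ is a semicircle whose endpoints $a,b$ satisfy $|a-b|\le 1$; otherwise its endpoints interlace with those of its translate and the loop self-intersects. Such semicircles cannot converge to the vertical geodesic $\tilde\lambda$. So eventually $\tilde x_{i_j}$ ends at $\tilde p$, and your translation to the conical cover goes through.
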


 \begin{proof}
     The argument in the second paragraph of the proof of \cite[Theorem~5.1.1]{BavardWalker}, shows that $\lambda$ is a high-filling ray and that the equivalence class of $P(a,\lambda)$ is $P$.  Their description of the boundary implies that $\lambda$ belongs to the clique corresponding to $P$.
 \end{proof}

\begin{proof}[Proof of Theorem~\ref{thm:main2}]
    Fix a mapping class $g_n\in \operatorname{MCG}(\Sigma)$ as in Theorem~\ref{main} with $n\geq 2$.  For simplicity, we denote $g=g_n$.  We will show that the high-filling rays forming the clique in the attracting limit point of $g$ are related to the $n$ geodesic laminations constructed in that theorem.

    We modify the geodesics  $\gamma^t$ by reversing the procedure for building $C_i$ from $\alpha_i$.  Consider the intersection of the small disk $D_p$ around $p$ and $\gamma^t$.  We may choose a representative $\nu^t$ of $\gamma^t$ that contains an arc on $\partial D_p$.  Replace this arc with two segments from its endpoints to the puncture $p$, and call the resulting ray $\beta^t$.  Note that $\beta^t$ and $\nu^t$ agree everywhere outside of $D_p$.  It is clear that we can fix geodesic representatives $\tilde \alpha_{ni+t}$ of lifts of $\alpha_{ni+t}$ to $\mathbb H^2$ and a geodesic representative $\tilde \beta^t$ such that $\tilde \alpha_{ni+t}$ converges to $\tilde \beta^t$ as $i\to\infty$.  
    
    Notice that $\beta^t$ starts like $\alpha_{i}$ for every $i\equiv t\mod n$.  In particular, the sequence $(\alpha_i)_{i\in\mathbb N}$ has a subsequence $(\alpha_i)_{i\equiv t\mod n}$ that cover converges to $\beta^t$.  By Lemma~\ref{lem:boundarypt}, $\beta^t$ is a high filling ray in the clique corresponding to the attracting limit point of $g$.  For $s\neq t$, the rays $\beta^s$ and $\beta^t$ contain distinct initial segments: this follows from Corollary~\ref{ini_t} and the fact that $g^t(\delta_0)$ and $g^s(\delta_0)$ start in distinct sectors; see \cite[Definition~2.9]{Abbott_2025} for the definition of a sector.  Thus this clique contains at least $n$ elements.  

    To show that it contains exactly $n$ elements, we show that if $\delta$ is a ray in the attracting clique disjoint from $\beta^t$, then $\delta$ must be isotopic to $\beta^j$ for some $j$.  Fix $t$, and fix some $i\equiv t\mod n$, so that  $\beta^t$ starts like $\alpha_{i}$.  By \cite[Lemma~4.4]{Abbott_2022}, $g^{-i}(\beta^t)$ starts like $\alpha_0$, and $g^{-i}(\delta)$ is disjoint from $g^{-i}(\beta^t)$.  Here, we are applying  \cite[Lemma~4.4]{Abbott_2022} with $\beta^t$ playing the role of $\delta$, an arc. However, the proof of \cite[Lemma~4.4]{Abbott_2022} only ever uses an initial subsegment of the arc, so it applies equally well to the geodesic $\beta^t$.  Since $g^{-i}(\beta^t)$ starts like $\alpha_0$, it contains the ``beginning" of $\alpha_0$; hence the ray $g^{-i}(\delta)$, being disjoint from $g^{-i}(\beta^t)$, is disjoint from the beginning of $\alpha_0$.  Applying \cite[Proposition~4.10]{Abbott_2022} yields a constant $0\leq k_i\leq 3n +1$ such that $g^{-i+k_i}(\delta)$ starts like $\alpha_1$.  We note that \cite[Proposition~4.10]{Abbott_2022} is stated for an arc  that is disjoint from the beginning of $\alpha_0$, but the proof relies only on the initial portion of the arc, so it applies equally well to the ray $g^{-i}(\delta)$.   Finally, another application of \cite[Lemma~4.4]{Abbott_2022} shows that $\delta$ starts like $\alpha_{i-k_i+1}$. Notice that the constant $k_i$ may depend on $i$. In particular, since $\delta$ starts like $\alpha_{m_i}$, it begins in the same sector as $\alpha_{m_i}$.  The initial segment of $\alpha_{m_i}$ depends only on $m_i\mod n$, by Corollary~\ref{ini_t}.  Thus if $j$ is the sector in which $\delta$ starts, then $m_i:=i-k_i+1 \equiv j \mod n$.
    
    However, since $i=t+n\mathbb N$ was arbitrary and $0\leq k_i\leq 3n+1$, we see that $m_i\to\infty$. Combining this with the fact that $m_i\equiv j\mod n$, we see that after passing to a subsequence we may assume the $m_i$ are strictly increasing.  Thus $(\alpha_{m_i})_{i\geq 0}$ is a subsequence of $(\alpha_{kn_j})_{k\geq 0}$.   Since $\delta$ starts like $\alpha_{m_i}$ for each $i$, and $\lfloor \frac12 \ell_c(\alpha_{m_i})\rfloor-2\to\infty$, the subsequence $(\alpha_{m_i})$ cover converges to $\delta$.  But the sequence $(\alpha_{kn+j})_{k\geq 0}$ cover-converges to $\beta^j$, and so by the uniqueness of cover-convergence limits, we have $\delta=\beta^j$.
    
    We conclude that the clique $\{\beta^t\mid 0\leq t <n\}$ is the attracting limit point of $g$.  In particular, the attracting limit point of $g$ has weight $n$.

    Finally, notice that $\beta^t$ is the cover-convergence limit of $\alpha_{nk+t}$.  Since cover convergence is preserved by homeomorphisms, $g(\beta^t)$ is the cover-convergence limit of 
    \[
    g(\alpha_{nk+t})=\alpha_{nk+(t+1)}.
    \]
    The latter sequence cover converges to $\beta^{t+1}$, where the index $t$ is considered modulo $n$.  Since cover-convergence limits are  unique, $g(\beta^t)=\beta^{t+1}$, concluding the proof when $n\geq 2$.
\end{proof}

 In \cite{Abbott_2025}, the weights of the attracting limit points of the family of mapping classes were not calculated.  However, a similar argument as in the proof of Theorem~\ref{thm:main2} will show that it must have weight one.  While there are no precise analogues of \cite[Lemma~4.4]{Abbott_2022} and \cite[Proposition~4.10]{Abbott_2022} stated in \cite{Abbott_2025}, the analysis of ``highways" from that paper could be used instead.

Finally, we prove Theorem~\ref{thm:main3}.

\begin{proof}[Proof of Theorem~\ref{thm:main3}]
    If $n=1$, then the theorem is immediate from \cite[Theorem~1.2]{Abbott_2025}.  Thus assume $n\geq 2$, and
    let 
    \[
    L=\bigcup_{t=0}^{n-1} L_t.
    \]
    We claim that no leaf of $L_s$ crosses a leaf of $L_t$. Suppose otherwise, and choose crossing  lifts corresponding to paths $\eta_s\in \Gamma_s$ and $\eta_t\in \Gamma_t$.  Since crossing is witnessed by finite subpaths and $\Gamma_i=\overline{\mathcal O_i}$, there are elements of $\mathcal O_s$ and $\mathcal O_t$ which cross.  These correspond to lifts of $\gamma^s$ and $\gamma^t$, so $\gamma^s$ and $\gamma^t$ cross on $\Sigma$.   For each $i$, there is a ray $\nu^i$ with geodesic representative  $\gamma^i$ that differs from $\beta^i$ only inside the disk $D_p$.  Outside of $D_p$, the ray $\beta^s$ and $\beta^t$ are disjoint, while inside $D_p$ the modification replaces their initial and terminal segments with a common arc of $\partial D_p$ and hence introduces no new transverse crossings. Thus the isotopy classes of $\nu^s$ and $\nu^t$ do not cross, and so neither do their geodesic representatives, which is a contradiction. 

    Since each $L_t$ is closed and there are only finitely many of them, their union $L$ is closed.  Therefore $L$ is a geodesic lamination.

    Finally, we show that $g(L_i)=L_{i+1}$, with indices taken modulo $n$.  By Theorem~\ref{thm:main2}, $g(\beta^t)=\beta^{t+1}$.  The modification which recovers $\gamma^t$ from $\beta^t$ is $g$--equivariant, since $D_p$ is $g$--equivariant and the  initial and terminal segments are always replaced with the same common arc of $\partial D_p$.  Thus $g(\gamma_t)$ is isotopic to $\gamma_{t+1}$, and hence the geodesic representative of $g(\gamma^t)$ is $\gamma^{t+1}$.  Since $L_i=\overline{\gamma^i}$, $g(L_t)=L_{t+1}$.  In particular,
    \[
    g(L)=g\left(\bigcup_{t=0}^{n-1}L_t\right)=\bigcup_{t=0}^{n-1}g(L_t)=\bigcup{t=0}^{n-1}L_{t+1}=L.
    \]
\end{proof}

\printbibliography

\end{document}